\documentclass[preprint,12pt,authoryear]{elsarticle}
\usepackage[margin=2.5cm]{geometry}
\usepackage{amssymb}
\usepackage[ruled,vlined]{algorithm2e} 
\usepackage{makecell}
\usepackage[table]{xcolor}
\usepackage{soul}
\usepackage{colortbl}
\usepackage{threeparttable}
\usepackage{verbatim}
\usepackage{stmaryrd}
\usepackage{amsmath}
\usepackage{natbib}
\usepackage{ragged2e}
\usepackage{multirow} 
\usepackage{array}
\usepackage{longtable,amsthm}
\usepackage{geometry}
\usepackage{adjustbox} 
\usepackage{booktabs} 
\usepackage{pdflscape}
\usepackage[normalem]{ulem}
\usepackage{tikz}
\usepackage{pgfplots}
\pgfplotsset{compat=1.18}
\usepackage{subcaption}
\usepackage{graphicx}
\usepackage{url}
\usepackage[hyperfootnotes=false]{hyperref}
\usepackage{ulem}
\usepackage{setspace}
\newcolumntype{P}[1]{>{\centering\arraybackslash}m{#1}}
\usepackage{algpseudocode}

\newtheorem{mydef}{Definition}

\journal{Computers \& Operations Research}

\begin{document}
\onehalfspacing
\begin{frontmatter}


\title{Conflict-Aware Seat Assignment in Classroom Environments}


\author[inst2]{Bruna Cristina Braga Charytitsch}
\author[inst2]{Mari\' a Cristina Vasconcelos Nascimento}

\affiliation[inst2]{organization={Instituto Tecnológico de Aeronáutica (ITA)},
            addressline={Praça Marechal Eduardo Gomes, 50 - Vila das Acacias}, 
            city={São José dos Campos},
            postcode={12228-900}, 
            state={SP},
            country={Brazil}}

\begin{abstract}
\onehalfspacing

Determining an effective seating plan is a key challenge in classroom management, as the spatial arrangement of students directly influences interpersonal dynamics and the overall learning environment. This paper introduces the Student Seat Allocation Problem (SSAP), which consists of strategically assigning students to seats in a traditional classroom so as to minimize interpersonal conflicts while satisfying operational constraints. We propose a mathematical model and an Iterated Local Search (ILS) heuristic to address the SSAP. Computational experiments on a benchmark of 131 instances show that both methods converge to the optimal solution in 78 small- and medium-scale instances. For large-scale instances, Gurobi frequently exhausted the time limit yet achieved very low optimality gaps, outperforming the heuristic's average gap in 45 instances. ILS, in turn, outperformed Gurobi in 8 instances and, when analyzing individual runs, successfully reached a gap of zero in multiple executions for several large-scale instances — within a fraction of Gurobi's computational time. These results position ILS as an effective and accessible alternative for educational institutions where commercial solver licenses are unavailable or cost-prohibitive.

\end{abstract}

\begin{keyword}
Student Seat Allocation Problem \sep Educational environment conflict \sep Iterated Local Search
\end{keyword}

\end{frontmatter}



\section{Introduction }
\label{sec:sample1}
\onehalfspacing
Classroom dynamics are influenced by several factors that shape both teaching and the social environment. One of the first challenges for teachers is selecting the most effective seating plan to achieve lesson objectives. The layout — whether in rows, groups, pairs, modular configurations, or horseshoe shapes \citep{McCorskey1978, Kaya2007, Hamilton2019} — forms the foundation of the learning environment. The traditional row arrangement remains widely used and accepted in lecture-based lessons, where the teacher is the primary source of information. However, contemporary educational approaches view the teacher as a facilitator, encouraging students to take a more active role in their learning, often calling for more flexible seating arrangements.

In addition to teachers’ preferences, seating arrangements also impact students' comfort and motivation. Students often make individual choices based on their characteristics. For example, more extroverted students may feel at ease in group arrangements that promote interaction, while quieter students might prefer rows that offer less direct contact.

\cite{McCorskey1978} conducted a pioneering study investigating the influence of seating arrangements on classroom communication. This study focused on students' preferences for different layouts (traditional, horseshoe, and modular) and how these preferences related to their levels of communication apprehension (CA). Even decades later, McCorskey's findings remain relevant, suggesting that students with high CA prefer arrangements that inhibit interaction, while those with low CA tend to favor setups that facilitate participation. This seminal study underscores the importance of considering individual student preferences and the attractiveness of the course when deciding on seating arrangements. Allowing students the freedom to choose their seats can enhance communication and learning in the classroom.

Once the classroom seating arrangement is defined, another significant challenge for teachers is assigning students to specific seats. The literature dedicated numerous studies highlighting the challenges involved in this issue \citep{Meeks2013, Tobia2022, Yang2022, Nehyba2023}.
Various strategies take into account emotional, pedagogical, and social factors \citep{Gremmen2016, HOEKSTRA2023104016}. For example, seating choices might be based on student's physical and academic needs: taller students often sit in the back, while the positions of shorter students or those with learning difficulties must be closer to the front to facilitate interaction and focus. Teachers may also group students by the ability to encourage collaboration or separate them to prevent conflicts, fostering an environment that supports both learning and emotional well-being.

When assigning each student’s seating position, it is essential to consider conflict prevention, as conflicts can arise in various ways: from simple disagreements between peers to external issues, such as family rivalries. These conflicts may include cases of bullying or close friendships that disrupt the class due to excessive interactions. Careful planning of seating arrangements is crucial for maintaining harmony and focus in the classroom.

\cite{BRAUN2020} investigated the effects of a seating intervention on social dynamics among students, focusing on ``target" students (those displaying problematic behaviors) and ``nontarget" students (those without such behaviors). Conducted over eight weeks, the study involved placing students in proximity to those they had previously expressed dislike for. The findings revealed that, unlike previous interventions that showed positive short-term outcomes, this approach led to increased aggressive behaviors and reduced perceptions of cooperation among students, affecting both target and nontarget groups. The authors suggest that forced proximity may initially intensify conflicts and that a longer intervention period might be needed for relationships to stabilize and improve. This study underscores the complexity of classroom social dynamics and the importance of carefully analyzing behavior management interventions.

In Brazil’s basic education, teachers are often responsible for creating a ``seating chart", aiming to assign each student to a specific seat and periodically rearranging them to encourage new interactions and avoid a fixed arrangement. This process requires consensus among the teachers of the same class and reflects the complexity of managing classroom space effectively, especially within a traditional classroom layout. Despite ongoing debates about the most effective classroom setup, the traditional row arrangement remains common, particularly for maintaining control and facilitating instruction. Since student seats are periodically revised (e.g., every bimester), this allows teachers to update their decisions based on accumulated experience. Furthermore, regular staff meetings provide opportunities to share additional information about students, including behavioral issues, learning difficulties, and social dynamics that may not be immediately apparent to an individual teacher.

This paper introduces the Student Seat Allocation Problem (SSAP) for traditional classrooms, where seats are organized in parallel rows with a variable number of desks per row. The primary goal of SSAP is to assign students to seats in a way that minimizes pairwise student conflicts by placing students with potential issues as far apart as possible. 
This study focuses on upper elementary classrooms, where teachers often adjust the seating to manage conflicts and disruptions stemming from confrontational relationships or close friendships, making it essential to maximize the distance between students with conflicts while working within the physical constraints of the classroom.

This paper addresses the SSAP by proposing an integer programming formulation and a heuristic method to support the decision-making of large-scale instances. We introduce an Iterated Local Search (ILS) heuristic \citep{Lourenço2003}, an efficient metaheuristic applied to a broad range of optimization problems. Computational experiments on real and simulated datasets indicate a good performance of ILS speciallly on complex instances, where a high number of conflicts between students is observed.

The primary contributions of this paper are:
\begin{itemize} 
\item The investigation and formal definition of the Student Seat Assignment Problem (SSAP), aimed at optimizing the classroom environment for improved pedagogical outcomes; 
\item The proposal of a pedagogically-grounded mathematical model that enforces a strict priority among assignment criteria, effectively addressing the impact of row and column layouts on student supervision; 
\item The definition of upper and lower bounds, which were utilized to calibrate the parameters for both the heuristic and the mathematical model; 
\item The development of an Iterated Local Search (ILS) metaheuristic with tailored components specifically designed to navigate the hierarchical objective function and the hard constraints of the SSAP; 
\item A comparative analysis between the solutions generated by the proposed heuristic and existing real-world assignments curated by educators.
\end{itemize}

The rest of this paper is organized as follows. Section~\ref{sec:relatedworks} reviews recent relevant literature approaching the challenges of the seating arrangement, expanding to applications beyond the educational settings. Section~\ref{sec:salp} introduces the SSAP at length, introducing an integer programming formulation. Section~\ref{sec:ils} presents the proposed ILS, specially designed for the SSAP.  Section~\ref{sec:compexp} shows the computational experiments carried out in real and artificial datasets considering both ILS and a commercial solver.  Finally, Section~\ref{sec:conclusions} concludes this study by suggesting future research directions.


\section{Related Works}
\label{sec:relatedworks}

The SSAP is focused on optimizing the assignment of students to desks to minimize interpersonal conflicts in seating arrangements. This issue is relatively underexplored in existing research in the educational context \citep{Compani}. Therefore, this section provides a brief literature review of a seating plan challenges in diverse contexts, highlighting the most relevant studies related to this topic.

\subsection{Seating Arrangement Problems}

The seating assignment literature has historically been fragmented across isolated domains. A recent comprehensive review by \cite{Compani} identified that seating assignment models and methods have been explored across four main domains: transportation, dining, entertainment, and classrooms.

Various seating arrangement problems present challenges that are also relevant to the SSAP. For instance, from a game-theoretic perspective, \cite{10.5555/3398761.3398979} introduced a seating arrangement model focused on organizing agents according to their preferences for neighboring seats, highlighting the difficulty of achieving fair and stable plans. \cite{10.24963/ijcai.2023/285} investigated four NP-hard variants of the seating arrangement problem: Multi-Winner Assignment (MWA), Multi-Utility Assignment (MUA), Envy-Free Assignment (EFA), and Exchange-Stable Assignment (ESA).

\cite{Hill_2024} proposed the social classroom seating assignment problem, which aims to maximize new social connections among students by exposing them to peers they do not yet know. The problem is parameterized by students' existing social networks and the physical seating structure of the classroom, incorporating tie potentials that represent the likelihood of two neighbors forming a connection. The authors developed compact integer programming formulations and fast heuristics guided by network centrality measures, demonstrating applicability on realistic instances through a practical case study in which optimized arrangements led to substantial growth in new social interactions and favorable feedback from both instructors and students.

Several recent studies focus on spatial optimization for health safety. \cite{greenberg2021automated} modeled seat allocation in classrooms with fixed seats as a maximum independent set problem, with the goal of maximizing the number of occupied seats while ensuring that all assigned seats are at least a minimum safe distance apart, in accordance with COVID-19 social distancing guidelines. In the same context, \cite{Bayram2023} proposed optimization models and heuristic algorithms for seat allocation in classrooms to minimize infection risks through social distancing, highlighting the importance of adaptive strategies in fixed-seat settings such as theaters.

\cite{Karakose2024ARS} introduced the Maximum Diversity Social-Distancing Problem (MDSDP), a novel variant of the Maximum Diversity Problem applied to classroom seat allocation for public health safety. The authors proposed a two-phase optimization approach: the first phase applies a binary search algorithm to maximize the minimum distance between any pair of students, while the second phase employs mixed-integer programming to maximize the average distance among all occupants. Unlike prior models that rely solely on threshold-based safety limits, this approach ensures robust and uniform dispersion of students regardless of classroom layout, providing a mathematical framework to optimize room capacity while maintaining effective social distancing.

From a different perspective, \cite{Lewis2016} addressed seating arrangements for social events such as weddings, modeling the problem as a generalization of weighted graph coloring and $k$-partition problems, and employing a two-stage heuristic algorithm to produce high-quality solutions efficiently. \cite{VANGERVEN2022914} explored how seat distribution affects political dynamics in parliamentary settings, introducing a mixed-integer programming model and a heuristic that provide adaptable solutions for various chamber layouts. Finally, \cite{Shiina2023} addressed optimal seat allocation in the aviation industry, employing a stochastic programming model to maximize revenue while accounting for passenger preferences.

In the following section, we provide a detailed description of the SSAP, outlining the specific constraints that characterize the problem.

\section{The Student Seat Allocation Problem}
\label{sec:salp}

The Student Seat Allocation Problem (SSAP) is an assignment problem that involves optimizing the student-to-desk mapping in a classroom with a traditional layout, where seats are arranged in parallel rows, and the number of seats per row may vary. 

The main goal of the SSAP is to assign students to desks, using information about pairwise conflicts between students and other seating preferences, to ensure a smooth learning environment. In this problem, we assume that the number of available seats is sufficient to allocate all students. The problem adheres to the following conditions for allocating students in conflict:

\begin{enumerate}
    \renewcommand{\labelenumi}{\roman{enumi}.}
 \item Students with interpersonal conflicts should preferably be 
assigned either to seats in non-consecutive rows, or to seats within 
the same row with at least one seat between them;

\item If neither condition can be satisfied for some conflicting pairs and they must be assigned to consecutive rows, they should not be seated directly side-by-side across rows, nor in immediately adjacent front-to-back positions.
    \end{enumerate}

Additionally, individual priorities are taken into account when allocating students to the front or back rows of the classroom. 

\begin{mydef}
\justifying
Let $\Lambda$ be the set of rows in the classroom and $n_{\lambda}$ the number of seats (desks) in  row $\lambda$, $\lambda \in \Lambda$. Moreover, consider $D_{\lambda p}$ the seat in the \(\lambda\)-th row and \(p\)-th position, $p \in \{1, ... , n_{\lambda}\}$.
\label{def2.1}

\begin{enumerate}[(i)] 
    \justifying
    \item (Front seats) For each row $\lambda \in \Lambda$, the seats $D_{\lambda 1}$ and $D_{\lambda 2}$ are considered as ``in the front" of the classroom.\label{hyp1}
    \item (Back seats) For each row $\lambda \in \Lambda$, the seats $D_{\lambda n_{\lambda}}$ and $D_{\lambda (n_{\lambda}-1)}$ are considered as ``in the back" of the classroom.\label{hyp2}
\end{enumerate}

\end{mydef}

The concepts described in Definition~\ref{def2.1} are illustrated in Figure~\ref{fig:fig2.1}, where the educator's desk and other essential elements have been omitted to focus solely on the seats and rows. Figures~\ref{fig:fig2.1}(a) and~\ref{fig:fig2.1}(b) highlight in gray the desks considered in this problem as being in the front and back, respectively.

\begin{figure}[!htb]
\centering
\includegraphics[width=\textwidth]{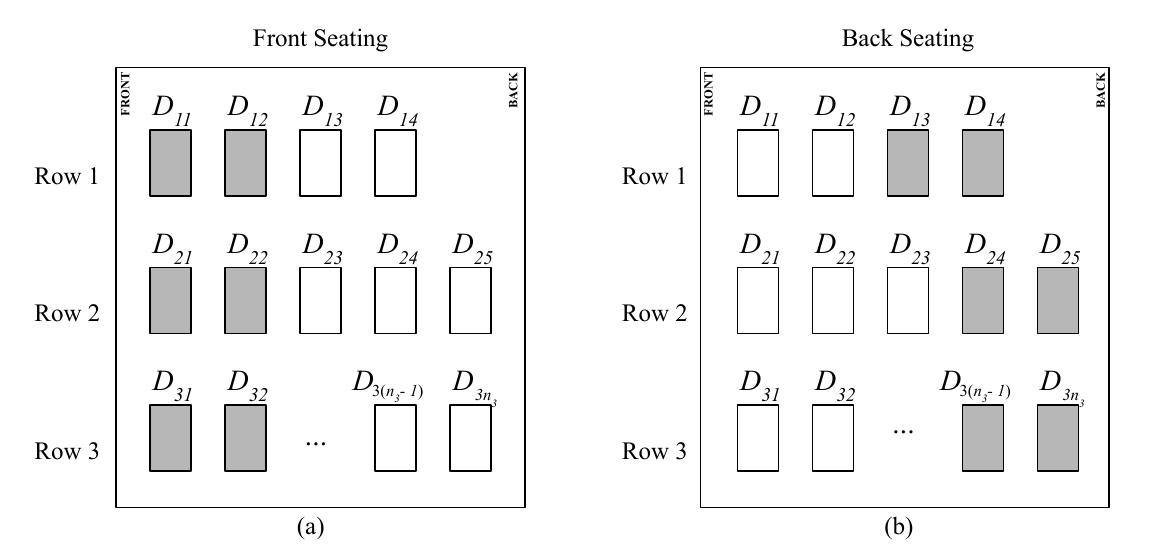}
\caption{A hypothetical representation of a traditional classroom with three parallel rows is shown. Rows 1, 2, and 3 contain 4, 5, and $n_3$ desks, respectively. The desks highlighted in gray represent seats at the front (a) or back (b) of the classroom.
} \label{fig:fig2.1}
\end{figure}

In  SSAP, the two seats at the front of each row are designated as the front seats, while the two seats at the back of each row are designated as the back seats. The following subsection presents the problem modeling.

\subsection{Mathematical Formulation}
\label{subsec:sample_a}

The mathematical model of the problem involves conceptualizing the row classroom as layers of a hierarchical graph. In this scenario, vertices (students) are placed at the open slots (empty desks) in the layers (or rows), and edges are considered actives when conflicting students are allocated in consecutive layers.

\begin{mydef}(Requirements for students)
Let \(\mathcal{S}\) denote a set of students in a classroom. For each student \(i \in \mathcal{S}\), the requirement \(r_i\), where \(r_i \in \{-1, 0, 1\}\), specifies the desired seating position for the student. Specifically:
\begin{itemize}
    \item \(r_i = -1\) indicates that the student needs to sit in the back of the classroom.
    \item \(r_i = 0\) indicates that there is no specific seating requirement for the student.
    \item \(r_i = 1\) indicates that the student needs to sit in the front of the classroom.
\end{itemize}
\label{def2.2}
\end{mydef}


\begin{mydef} 
Let $i, j \in \mathcal{S}$ be students in a classroom. Define the conflict indicator function \( c_{ij} \) as:
\[
c_{ij}  =
\begin{cases}
1, & \text{if the students i and j are in conflict} \\
0, & \text{otherwise}
\end{cases}
\]
\end{mydef}

 \begin{mydef}(Active edge) An edge is considered active if there are students in conflict in consecutive rows. Specifically, an edge $(i, j)$ in rows \(\lambda\) and \(\lambda+1\), where $\lambda, \lambda+1 \in \Lambda$, is active if $c_{ij} = 1$  and student $i$ (or $j$) is allocated to a seat in row $\lambda$ and student $j$ (or $i$) is assigned to a seat in row $\lambda + 1$. Adjacent rows to $\lambda$ are rows $\lambda - 1$ and $\lambda + 1$. 
\label{def2.3}
\end{mydef}

Moreover, the distance between a pair of students assigned to 
consecutive rows --- one seated at position $k$ in row $\lambda$ 
and the other at position $z$ in row $\lambda+1$ --- is defined 
as $|z - k|$, consistently with how distance is measured between 
students within the same row.

The SSAP utilizes a hierarchical objective function that enforces a strict priority among assignment criteria. These criteria are grounded in pedagogical experience and qualitative feedback from educators, reflecting the fact that row and column configurations have distinct impacts on classroom supervision and student interaction.
Teachers’ experiences indicate that assigning conflicting students to non-consecutive rows or the same row is preferable to placing them in consecutive rows, as the latter configuration more easily facilitates disruptive interaction in typical classroom layouts.
In scenarios where students are placed in the same or consecutive rows, the model enforces a minimum separation distance. These preferences are further influenced by visibility and supervision factors that standard distance measures may not fully capture. When a consecutive-row allocation is unavoidable, the objective function seeks to maximize the spacing between students while strictly maintaining the required minimum distance.

The decision variables, parameters, and integer programming model are presented next.

\begin{center}
\begin{longtable}{ccp{10cm}}
  Parameters & & \\
  $i, j$ &: & indexes representing students (vertices);\\
  $\lambda$ &: & index representing rows (layers);\\
  $k, z$ &: & indexes representing positions (seats/desks);\\
  $S$ & : & set of students in a classroom; \\
$|S|$&: & total number of students;\\
  $\Lambda$ & : & set of rows in a classroom;  \\
  $|\Lambda|$ & : & total number of rows; \\   
  $n_{\lambda}$ & : & number of desks in row $\lambda \in \Lambda$;\\
  $c_{ij}$ & : & parameter that receives value 1 if student $i$ has a conflict (edge) with student $j$, and 0, otherwise;\\
  $E$ & : & set of edges  $(i,j)$ from the conflict graph, where $i<j$ and $c_{ij}=1$;\\
  $|E|$ & : & total number of edges from the conflict graph;\\
  $r_i$ &: & parameter that takes the value 1 if student $i$ needs to sit in the front of the classroom, -1 if student $i$ needs to sit in the back of the classroom, and 0 if there are no specific seating requests for student $i$;\\
  $d_{min}$ &: &  defines the minimum distance required between conflicting students in consecutive rows; \\
 $d_{min}'$ &: &defines the minimum distance required between conflicting students within the same row;  \\
  $\Psi$ & : & active edge weight;\\
  $\mathop{n} $ & : & the total number of desks. \\\\  
  Decision variables &  & \\
  $x^{\lambda}_{ik}$ & : & binary variable that receives value 1 if student $i$ is in row $\lambda$ at $k$-th position, and 0 otherwise;\\
  $\mathop{w^{\lambda}_{ijkz}}$ & : & auxiliary binary variable that takes the value 1 if students $i$ and $j$ are in layers $\lambda$ and $\lambda + 1$, respectively, in positions $k$ and $z$, in that order, and 0 otherwise.\\
 \end{longtable}
\end{center}

The SSAP model is formulated as follows:

\allowdisplaybreaks
\begin{align}
    \max f(x,w,y): \sum_{\lambda=1}^{|\Lambda|-1} \sum_{k=1}^{n_{\lambda}} \sum_{z={1}}^{n_{(\lambda + 1)}} \sum_{\substack{ (i,j)\in E} }\left( \mathop{|z - k| -   \Psi} \right) \cdot w^{\lambda}_{i j k z}   \label{obj1}
\end{align}

\noindent subject to:

\begin{align}
    &\sum_{\lambda=1}^{|\Lambda|} \sum_{k=1}^{n_{\lambda}} x^{\lambda}_{ik} = 1, \quad \forall i \in S \label{r2} \\
    &\sum_{i=1}^{|S|} x^{\lambda}_{ik} \leq 1, \quad \forall \lambda \in \Lambda, \, k \in [1, n_{\lambda}] \label{r3} \\
    & z - k \geq (x^{\lambda}_{ik} + x^{\lambda}_{jz} - 1) \cdot d_{min}', \quad \forall (i,j) \in E, \lambda \in \Lambda, k \in [1,n_{\lambda}-1], z \in [k+1,n_{\lambda}] 
    \label{r5}\\
    &w^{\lambda}_{i j k z} \geq x^{\lambda}_{ik} + x^{\lambda+1}_{jz} - 1, \quad \forall (i,j) \in E, \lambda \in [1,|\Lambda|-1], k \in [1,n_{\lambda}], z \in [1,n_{\lambda+1}] 
    \label{r7} \\
    &w^{\lambda}_{i j k z} \leq x^{\lambda}_{ik}, \quad \forall (i,j) \in E, \lambda \in [1,|\Lambda|-1], k \in [1,n_{\lambda}], z \in [1,n_{\lambda+1}]
    \label{r8} \\
    &w^{\lambda}_{i j k z} \leq x^{\lambda+1}_{jz}, \quad \forall (i,j) \in E, \lambda \in [1,|\Lambda|-1], k \in [1,n_{\lambda}], z \in [1,n_{\lambda+1}]
    \label{r9} \\
    &|z - k|\cdot w^{\lambda}_{i j k z}\geq d_{min}\cdot w^{\lambda}_{i j k z}\quad \forall (i,j) \in E, \lambda \in [1,|\Lambda|-1], k \in [1,n_{\lambda}], z \in [1,n_{\lambda+1}]
    \label{r10} \\ 
    &|z - k|\cdot w^{\lambda}_{i j k z} \leq \max(n_{\lambda} - 1, n_{(\lambda+1)} - 1),  \forall \lambda \in [1,|\Lambda|-1], k \in [1,n_{\lambda}], z \in [1,n_{\lambda+1}], (i,j)\in E
    \label{r11}\\
    & \sum_{\lambda=1}^{|\Lambda|} x^{\lambda}_{i 1} + \sum_{\lambda=1}^{|\Lambda|} x^{\lambda}_{i 2} = 1 \quad, \forall i \in S, \quad r_i = 1 \label{r12} \\
    & \sum_{\lambda=1}^{|\Lambda|} x^{\lambda}_{i n_{\lambda}} + \sum_{\lambda=1}^{|\Lambda|} x^{\lambda}_{i (n_{\lambda}-1)} = 1 \quad, \forall i \in S, \quad r_i = -1 \label{r13}
\end{align}


The objective function \eqref{obj1} aims to minimize the number of active edges while maximizing the distance between conflicting students across consecutive layers. The weight $\Psi$ emphasizes minimizing the number of active edges in the objective function. It ensures that the penalty for active edges significantly influences the optimization, prioritizing their reduction when maximizing the distance between them. A high value of $\Psi$, increases the penalty for solutions with  active edges, promoting their minimization over other considerations in the objective function. Moreover, the nonlinear absolute function  can be replaced by breaking the sum considering the cases when $z>k$ and $z<k$. 
 
To preserve the hierarchical behavior of the objective function, the weight parameter $\Psi$ must satisfy a sufficiently large lower bound. Since the maximum horizontal separation between two seats is bounded by $\max_{\lambda}(n_{\lambda}-1)$, the parameter must satisfy $\Psi > \max_{\lambda}(n_{\lambda}-1)$. This condition ensures that any reduction in the number of active edges always dominates any possible gain obtained from the distance maximization term.

Constraints \eqref{r2} ensure that each student $i$ occupies exactly one desk. Constraints  \eqref{r3} ensure that a desk is assigned to at most one student. 
Constraints \eqref{r5} ensure that conflicting students in the same layer have at least $d_{min}'-1$ desks between them. Constraints \eqref{r7} to \eqref{r9} ensure that $w^{\lambda}_{i j k z}$ is 1 when $c_{ij}$ is 1 and student $i$ is in layer $\lambda$ at position $k$ and, student $j$ is in the consecutive layer $\lambda + 1$ at position $z$. Otherwise, $w^{\lambda}_{i j k z}$ is null. Note that $w^{\lambda}_{i j k z}$ is an auxiliary binary decision variable used to linearize the expression $x^{\lambda}_{ik} \cdot x^{\lambda+1}_{jz}$.  The value of $d_{min}'$ and $d_{min}$ must be greater than or equal to 2 to ensure at least one desk between two conflicting students. Additionally, their values should not exceed \(\min_{\lambda \in \Lambda} \{n_{\lambda}\}\). For ease of notation, we treat $d_{min}'$ as equivalent to $d_{min}$ moving forward, reflecting the standard application of this parameter in the current study. Constraints \eqref{r10} and \eqref{r11} establish the lower and upper limits for the distance between conflicting students allocated in consecutive rows. The nonlinear absolute function  can be replaced by considering the constraints when $z>k$ and $z<k$, as discussed for the objective function. Constraints \eqref{r12} and \eqref{r13} ensure that students' requirements are met, placing them in the front seats (first or second seats) or in the back seats (penultimate or last seats), respectively.

Regarding the dimensionality of the formulation, let \(\bar{n} = \max_{\lambda \in \Lambda} n_{\lambda}\) be the maximum number of desks in a row. One can observe that the formulation contains $O(n \cdot |\Lambda| \cdot \bar{n})$ binary variables $x^\lambda_{ik}$, and $O(|E| \cdot |\Lambda| \cdot \bar{n}^2)$ auxiliary binary variables $w^\lambda_{ijkz}$. The latter term dominates the total variable count.

Constraints~(6)--(10) dominates in count possessing a \(O(|E||\Lambda|\bar{n}^2)\) number of constraints. The remaining assignment and requirement constraints, such as Constraints~(2), (3), (11), and (12), contribute lower-order terms to the constraint count.

Real-world educational networks, including those characterized by severe behavioral challenges and high friction rates, consistently preserve a significant degree of topological sparsity relative to a complete graph ($|E| \ll n^2$).

The proposed SSAP is NP-hard, as better discussed in \ref{complexity}.

\section{Iterated Local Search for the SSAP}
\label{sec:ils}

In this paper, we propose an Iterated Local Search (ILS) \citep{Lourenço2003} to heuristically solve the Student Seat Allocation problem. Algorithm \ref{alg:alg1}  presents a high-level ILS pseudocode. 

\textbf{\begin{algorithm}[!htb]
\caption{Iterated Local Search (ILS)}
$s_0$ = Initial Solution()\;
$s^*$, $s$ = Local Search($s_0$)\;
\While{stopping condition has not been satisfied}{
    $s'$ = Perturbation($s$)\;
    $s'$ = Local Search($s'$)\;
    $s$=Acceptance Criterion ($s', s^*$)\;
        \If{$s'$ is better than  $s^*$}{
         $s^*$ = $s'$\;
    }
}
\Return $s^*$
\label{alg:alg1}
\end{algorithm}}

ILS is a metaheuristic that enhances the search process by iteratively searching for solutions through perturbations and local search. As can be observed in Algorithm~\ref{alg:alg1}, the heuristic begins by generating an initial solution and then applying a local search to reach a local optimal solution. This local optimum is referred to as the reference solution. A perturbation is introduced to the reference solution, which modifies the current solution. A local search is then applied to the perturbed solution, producing a new local optimum. The algorithm employs an acceptance criterion to decide whether to replace the reference solution with the one found in the iteration. The stopping condition of the routine can be achieving a given maximum number of iterations. The best solution found during the process is then returned as the final solution.
ILS was adopted in this paper due to its consistently strong performance in tackling challenging combinatorial optimization problems. ILS is widely recognized in the literature for being a powerful heuristic for optimization problems, including vehicle routing problems \citep{Maximo2022,Maximo2024}, location problems \citep{Maximo2025}, graph coloring problems \citep{NOGUEIRA2021105087}, university course timetabling \citep{SONG2018597}, assignment problems \citep{LI2018120}, and others. Its approach allows for exploring infeasible solutions, facilitating the escape of local optima, which is crucial in complex contexts. The simplicity and modularity make ILS implementation intuitive, while the capability to incorporate new local heuristics and perform stochastic and directed searches contributes to its success in combinatorial optimization scenarios \citep{Lourenço2003, CARAMIA2008201}.

The following sections present the algorithms that compose the introduced ILS.

\subsection{Initial solution}
\label{subsubsec:sample_b1}

This section presents the algorithm to find an initial solution to the introduced ILS.

\subsubsection{Weight Matrix Construction}
Let $D_{\lambda,p}$ be the desk $p$-th located at the $\lambda$-th layer. To generate an initial solution, we first construct a square matrix  \(\mathbf{A}\) of order $n$, where $n$ is the number of seats, referred to as the weight matrix. Each row in \(\mathbf{A}\) corresponds to a student whereas each column represents a desk in the classroom. In line with this, the column of \(\mathbf{A}\) that corresponds to the first desk of layer $\lambda$ is:

\begin{equation}f_{\lambda}= 1 + \sum_{\lambda'=1}^{\lambda-1} n_{\lambda'} \end{equation} \label{mapping}

Therefore,  the column of \(\mathbf{A}\) that corresponds to desk $D_{\lambda,p}$ is $f_{\lambda}+(p-1)$. For example, in Figure \(\ref{fig:fig2.2}\), desk 7 (the seventh column of  \(\mathbf{A}\)) is the third seat in the second layer of the classroom, position $(2,3)$. 
\begin{figure}[!htb]
\centering
\includegraphics[width=\textwidth]{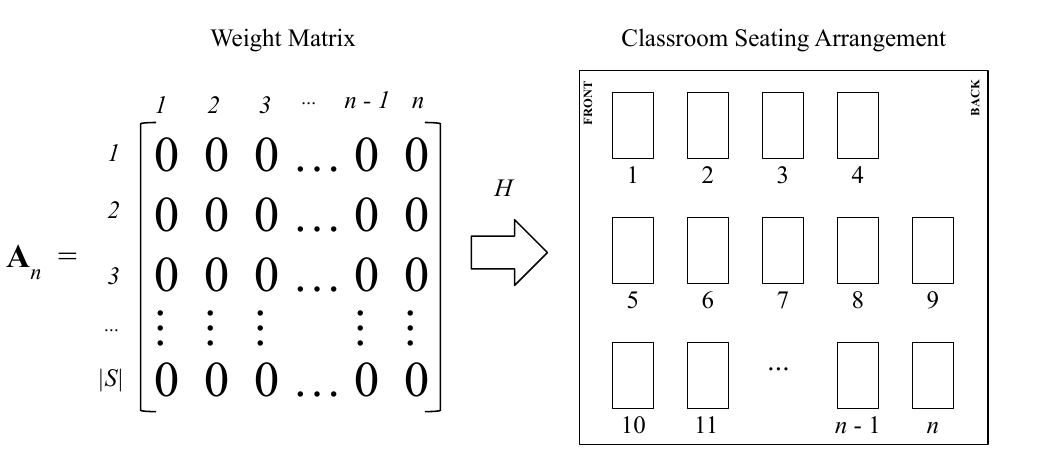}
\caption{Representation of the initially zero-weight matrix \(\mathbf{A}\) and the correspondence of its columns to the traditional classroom layout.} \label{fig:fig2.2}
\end{figure}

The last seat of a layer $\lambda$ is:
\begin{equation}b_{\lambda}=\sum_{\lambda'=1}^{\lambda}  n_{\lambda'}\end{equation}\label{mapping2}

The distance between a pair of desks in the same or consecutive layers is the absolute difference between their $p$ values. For example, in Figure \ref{fig:fig2.2}, the distance between desks $D_{2, 2}$ and $D_{1, 4}$ is $|2-4|=2$. 

The weight matrix undergoes an iterative process that updates its values, with null values representing its initial state. The idea is to simulate a positioning of conflicting students to fill the matrix with values 
that signalize prohibitive assignments. 
Let $G=(V, E)$ be a conflict graph. The set of vertices $V$ represents the students involved in some conflict, and $E$ is composed of all conflicting edges.

Bearing in mind that the column in \(\mathbf{A}\) that represents desk $D_{\lambda, p}$ is $l=f_{\lambda}+(p-1)$, for each conflicting student \(j \in \mathcal{N}(i)\), where $\mathcal{N}(i)$ is the set of neighbors of $i$, positive weights are assigned to $a_{j,l'}$ if $l'$ corresponds to a desk too close to \(l\), as defined by the problem constraints. This process repeats for all students in $G$. 

Given that the problem constraints require a minimum distance between conflicting students in the same and adjacent rows, the values assigned to matrix \(\mathbf{A}\) depend on the distance between students' desks. To better reflect these constraints, the attributed values vary in the interval [$-n\Delta$, $2\cdot\Delta$], where $\Delta = \max_{\lambda \in \Lambda} \{n_{\lambda}\} + 1$.
This value for $\Delta$ was adopted to ensure that matrix elements corresponding to undesirable assignments are significantly greater than zero. Penalties are applied and updated in the weight matrix, where the rows correspond to the students in conflict, and the columns correspond to the undesirable desks. Besides, the matrix elements indicating the pair student-desk where the student needs to occupy back or front seats must receive very low (negative) penalties as initial values to stimulate these assignments. 

The updating of the weight matrix is outlined in the following steps (see \textit{Weight Matrix Update}). 
{\small
\vspace{0.5cm}
\hrule
\vspace{0.05cm}
\noindent\textit{Weight Matrix Update} 
\vspace{0.05cm}
\hrule
\begin{enumerate}[\textit{Step} 1]

\item Let \( \overline{S} \) be the set of students who have at least one conflict, meaning that they belong to the conflict graph. Students are ordered using a composite rule: (i) students with higher conflict degree come first; (ii) among students with equal degree, those with front or back seating preferences are prioritized over neutral students; (iii) in the event of equal priority and degree, ties are broken randomly.

\item Initialize the weight matrix $\mathbf{A}$, inserting negative weights into the matrix positions where the rows correspond to students who should sit at the front or back, and the columns correspond to desks located at the front or back, respectively:
\[
a_{il} =
\begin{cases}
- n \cdot \Delta, & \forall i \in \overline{S} \land (r_{i} = 1),\ l = f_{\lambda}, f_{\lambda} - 1 ,\ \forall \lambda \in \Lambda, \\
- n \cdot \Delta, & \forall i \in \overline{S} \land (r_{i} = -1),\ l = b_{\lambda}, b_{\lambda} - 1,\ \forall \lambda \in \Lambda, \\
0, & \text{otherwise}.
\end{cases}
\]

 The simulation process involves the following steps, where students are hypothetically assigned to desks to update the elements of \(\mathbf{A}\). Let $i$ be the first element (student) from  \(\overline{S}\). 
 
\item Assign student \(i\) to a random seat \(D_{\lambda,p}\) not selected in previous steps, adhering to the students' preferences (front and back desk constraints) and ensuring non-overlapping assignments of student-desk pairs.\label{stepbefore}

\item Let $w$ be the column in \(\mathbf{A}\) that indicates  seat \(D_{\lambda,p}\)\footnote{According to Equation~\eqref{mapping}, $w=f_{\lambda}+ (p - 1)$.}. The elements of row \(i\) of the matrix \(\mathbf{A}\) are incremented by \(2 \cdot \Delta\), except for column \(w\) to ensure that the position \((i, w)\) has the lowest value. Similarly,  all elements in the column $w$ except for row \(i\) are incremented by \(2 \cdot \Delta\) to ensure that student \(i\) is the preferred choice for the seat at position \(w\). The columns of \(\mathbf{A}\) that indicate desks in the layer $\lambda$ with distance higher than
($d_{min}-1$) to $D_{\lambda,p}$ are referred to $\mathcal{N}_w$\footnote{This set is composed by columns $f_{\lambda}, f_{\lambda}+1, \ldots, w-2, w+2,\ldots, b_{\lambda}$}.  We say a pair of desks is adjacent if the distance between them is lower than or equal to 1. 

The set of columns of \(\mathbf{A}\) that indicate desks from layers $\lambda-1$ and $\lambda+1$ adjacent to $D_{\lambda,p}$ is referred to as $\mathcal{N}_w^1$. The set of columns of \(\mathbf{A}\) that indicate desks from layer $\lambda$ with distance less than or equal to $d_{min}-1$ from $D_{\lambda,p}$ is referred to as $\mathcal{N}_w^2$. The set of columns of \(\mathbf{A}\) indicating desks from layers $\lambda-1$ and $\lambda+1$ with distance higher than 1 to $D_{\lambda,p}$ is referred to $\mathcal{N}_w^+$. The process of updating  \(\mathbf{A}\) having student $i$ and desk $D_{\lambda,p}$ as pivot works as follows:
\[
a_{j, w'} :=
\begin{cases}
a_{j, w'} + 2 \cdot \Delta, & \text{where } j \in \mathcal{N}(i) \text{ and } w' \in \mathcal{N}_w^1 \cup \mathcal{N}_w^2, \\
a_{j, w'} + 2 \cdot \Delta - 0.1 \cdot |w - w'|, & \text{where } j \in \mathcal{N}(i) \text{ and } w' \in \mathcal{N}_w^+, \\
a_{j, w'} + \Delta - 0.1 \cdot |w - w'|, & \text{where } j \in \mathcal{N}(i) \text{ and } w' \in \mathcal{N}_w.
\end{cases}
\]

Note that the element value is reduced as the distance between desks in the same (or consecutive) layer increases. Moreover, the constant multiplying $\Delta$ expresses the preference for placing conflicting students in the same layer instead of consecutive layers.

\item Update the investigated student $i$ by picking the next in the ordered set \( \overline{S} \). \label{stepupdatestudent}

\item Go to \textit{Step} \ref{stepbefore} if there is at least one  student in \( \overline{S} \) not assigned to a desk in the simulation process. Otherwise, return matrix \(\mathbf{A}\). \label{laststep}
\vspace{0.5cm}
\hrule
\vspace{0.05cm}
\end{enumerate}
}


\subsubsection{Solution Construction}

Before discussing the routine for constructing a solution to the SSAP from matrix \(\mathbf{A}\), let us define a penalized objective function to evaluate infeasible solutions.

Let $P=\{x,w,y\}$ be an infeasible solution for the SSAP where: 
\begin{itemize}
    \item \( \alpha \) represents the total number of students who should be seated in the front of the classroom but are not,
    \item \( \beta \) represents the number of students who should be seated in the back of the classroom but are not,
    \item \( \gamma \) represents the total number of arcs (students in conflict) in the same layer that are less than $d_{min}$ units apart,  and
    \item \( \delta \) represents the total number of active arcs (students in conflict on consecutive layers) that are less than two units apart. 
\end{itemize}

Definition~\ref{def4} presents the penalized objective function used in the proposed heuristic.
\begin{mydef}(Penalized Objective Function) Let $P$ be an infeasible solution for the SSAP. The penalized objective function used in the heuristic, which allows for the evaluation of incomplete solutions and solutions with violations of positioning constraints, is defined as follows:

\begin{equation} 
f_p(P)= f(P)-\phi \cdot (\alpha + \beta + \gamma + \delta)
\label{fo} 
\end{equation}
\noindent where $\phi>0$ is the penalty factor.
\label{def4}
\end{mydef}

The construction of an initial solution for the introduced ILS proceeds through the following primary steps.
{\small
\vspace{0.5cm}
\hrule
\vspace{0.05cm}
\noindent\textit{Initial Solution Construction} 
\vspace{0.05cm}
\hrule
\begin{enumerate}[\textit{Step} 1]

\item Shifting normalization of matrix $\mathbf{A}$ to ensure non-negative values by subtracting the matrix's minimum value from each element:
\[
\mathbf{A} \leftarrow \mathbf{A} - \min(\mathbf{A})
\]
Let $D'$ be the set of tuples $(\lambda, p)$ indicating all desks $D_{\lambda,p}$.
\item Construct a Partial Solution (\(P=\{x,w,y\}\)) considering only the conflicting students prioritized for assignment, starting with the one with the highest degree of conflict: 
\begin{enumerate}[(i)]

\item  Let $D=\{{(\lambda,p)}\in D': \nexists j \in \mathcal{N}(i) | (x_{jp}^{\lambda}=1 \lor x_{jp}^{\lambda-1}=1\lor x_{jp}^{\lambda+1}=1)  \} $. If $D= \emptyset$ then $D=D^1\cup D^2$, where $D^1=\{(\lambda,p)\in D': j \in \mathcal{N}(i) | x_{jp}^{\lambda}=1 \land d_{ij}> d_{min} - 1  \} $ and $D^2=\{(\lambda,p)\in D': j \in \mathcal{N}(i) | x_{jp}^{\lambda-1}=1 \land d_{ij}>1 \lor  x_{jp}^{\lambda+1}=1 \land d_{ij}>1  \} $
\item Assign student \(i\) to a desk if $D\neq \emptyset$:
\\$x_{ik}^{\lambda^*}=1$, where $(\lambda^*,k)=\arg \min_{(\lambda, p)\in D}\{a_{i,t}: t=f_{\lambda}+p-1\}$;
\item Update $\mathbf{A}$ and $D'$:\\
$a_{i,t}= a_{i,t}+\theta, \forall j\in \mathcal{N}(i), t=f_{\lambda}+r$, where $(\lambda,r)\in \mathcal{N}^1(\lambda^*,k)$\footnote{The 1-neighborhood $\mathcal{N}^1$ of $(\lambda,r)$ is: $\{(\lambda^*,k-1), (\lambda^*,k+1), (\lambda^*-1,k-1),(\lambda^*-1,k),(\lambda^*-1,k+1), (\lambda^*+1,k-1),(\lambda^*+1,k),(\lambda^*+1,k+1)\}$.} and $\theta$  is a positive scalar;\\
$D'=D'\backslash (\lambda^*, k)$

\end{enumerate}

\item 
Compute the penalized objective function based on the partial solution \( P \) as defined in Equation~\eqref{fo};

\item Search for improvements through swaps: starting from the partial solution, the assigned students are swapped among the unoccupied seats to check for any improvements in the initial penalized objective function value. If a movement provides an improvement, the solution is updated;

\item After optimizing the partial solution, the unallocated students with a degree greater than zero and those with a degree of zero who have some seat preference will be inserted while respecting the problem's constraints;\label{stepinitial}

\item Randomly insert the remaining students (including those with a degree of zero): the students who have not yet been allocated to a desk are assigned to the remaining available seats, resulting in the initial solution (\( I \)).

\end{enumerate}
\hrule
\vspace{0.5cm}
}
Following the weight matrix update, \(\mathbf{A}\) undergoes shifting normalization, from which a partial solution $P$ is derived. In this phase, students are assigned to classroom seats based on the information contained in the weight matrix \(\mathbf{A}\) while respecting conflict constraints between students. The weight matrix defines the ``cost'' of placing a student in a specific position (see Figure \ref{fig:fig2.3}). 
Based on the obtained partial solution, the initial solution is created by making adjustments to improve the values of the penalized objective function, incorporating some elements of randomness. Note that in \textit{Step} \ref{stepinitial}, starting from a feasible partial solution of the problem, we attempt to insert the conflicting students not yet allocated to a desk into the available empty positions. In this case, the method investigates each unassigned conflicting student, trying to place them into available empty positions in the classroom, one at a time. If the insertion is successful, the student can sit at a desk while adhering to the constraints and feasibility of the solution. Therefore, the partial solution is updated, and we examine the next student. Finally, the updated partial solution and the unplaced students are returned. These students are then randomly inserted, increasing the likelihood of infeasibility in the initial solution.

\begin{figure}[!ht]
\centering
\includegraphics[width=\textwidth]{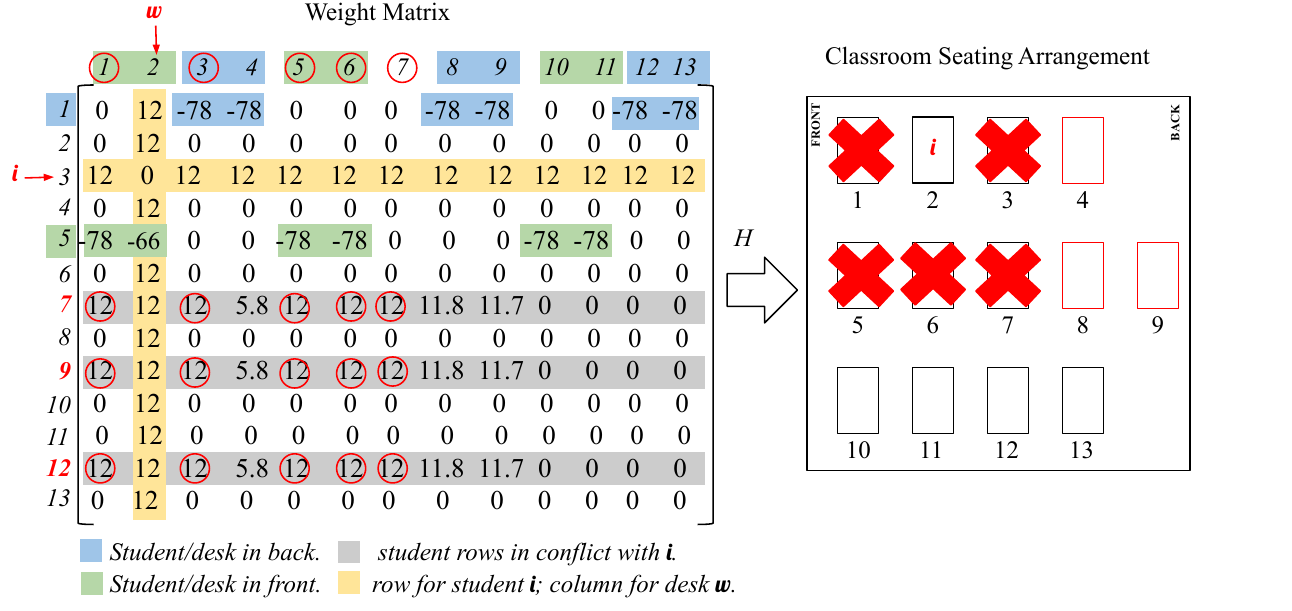}
\caption{Example of an updating of matrix \(\mathbf{A}\) based on unwanted positions. Desk \(w = 2\) should be reserved for student \(3\) and marked as the best option. Add \(2 \cdot \Delta\), where $\Delta =6$, to all other columns (desks) in row \(i\) and to all other rows (students) in column (desk) \(w\) to ensure \(w\) is preferred to student \(i\) and unavailable to others. In the classroom layout, desks adjacent to \(w\) in row 1 (desks 1 and 3) and desks 5, 6 and 7 in the next row should be avoided by students in conflict with student \(i\) -- namely students 7, 9 and 13 --, ensuring at least one desk between them (\(d_{min} = 2\)). These positions receive the value \(12\). The matrix elements corresponding to desks in the neighboring layers, seats 7 and 8, have a slight discount in the 12 value. Seat 4, which is in the same layer but is distant by at least 1 seat, receives the value $\Delta -0.1\cdot 2$ in the positions corresponding to the conflicting students.}
\label{fig:fig2.3}
\end{figure}

The initial solution goes through a local search phase aiming to find a solution with a better penalized objective function value. Then we perturb the current solution, regardless of its feasibility  yielding a feasible or an infeasible solution. The method returns the best solution found when it reaches the stopping criterion. The best solution is then further improved by a refinement strategy if there exists any active edge in the final solution.

The following sections present the details of each of these steps.

\subsubsection{Neighborhood Movements}
\label{subsubsec:sample_b2}

This section presents a local search algorithm consisting of four distinct movements. It aims to find the optimal solution that maximizes the penalized objective function, as defined earlier.  The pseudocode descriptions of these movements can be found in Section 3 of the Supplementary Material.

\begin{itemize}
    \item \textbf{I - Swap I}: This strategy aims to escape local optima by introducing controlled randomness in the selection of students for potential repositioning. In each swap, approximately {$\psi$}\% of students in each row are randomly selected as diversification targets. For each of these students, a set of candidate swap positions is generated based on their seating preference: students with front or back preferences are restricted to positions in the front or back sections of the classroom, respectively; students with neutral preference consider all other positions in their current row and in two randomly selected rows. {To speed up the search process, we limited the number of positions evaluated to $\Gamma$\% of the possible positions for students with neutral preferences. O}nly improving swaps are accepted. The process repeats while improvements are found. This method balances diversification and efficiency by limiting both the number of students and swap evaluations. 
    \item \textbf{II - Swap II}: This movement exchanges students who are supposed to sit at the back of the classroom with those who are currently seated there.
    \item  \textbf{III - Swap III}: This activity switches students who are supposed to sit in the front of the classroom with those who are seated there.
    Its pseudocode is the same as Algorithm 2 (described in the Supplementary Material),
    but considering the positioning difference (instead of the back seat is the front seat). 

 \item  \textbf{IV - Swap IV}: This movement identifies active edges in the current solution, where students in conflict are seated in consecutive rows besides violating the minimum distance constraint. The method involves exchanging students in conflict with other students in the classroom. 
\end{itemize}

These movements are applied in this sequence once, where each movement is executed until it reaches a stopping criterion.

\subsubsection{Perturbation Phase}
\label{subsubsec:sample_b3}

The perturbation phase is part of the ILS metaheuristic chosen to solve the SSAP. The main goal of applying perturbation to the solution is to introduce randomness and better explore the search space. The routine modifies a solution by performing random swaps considering viable movements on a certain percentage of student-desk allocations. Viable movements refer to swaps that do not violate any already met constraint. This means that it is allowed a swap involving a student that violates a constraint in the original position and is not necessarily met in the new position. The input for this routine consists of the reference solution and the perturbation degree $\theta$, which is defined as the fraction of elements to be perturbed, where $\theta \in (0,1]$.

{\small
\vspace{0.5cm}
\hrule
\vspace{0.05cm}
\noindent\textit{Perturbation} 
\vspace{0.05cm}
\hrule
\begin{enumerate}[\textit{Step} 1]

  \item Calculate the perturbation degree:
  $\rho$ = $\lceil n \cdot\theta \rceil$ is
  the number of elements to swap, calculated by multiplying the total number of students (desks) by the selected percentage. The  function $\max$ ensures that at least one element is swapped;

 \item Select randomly students to swap in the reference solution;

\item Perform the swap of each selected student by randomly selecting a viable position for that student;

\end{enumerate}
\hrule
\vspace{0.5cm}
}

\subsubsection{Local Search}
\label{subsubsec:sample_b4}

As the proposed ILS allows infeasible solutions for the SSAP, the best overall solution is not necessarily feasible. Therefore, to minimize active edges and maximize the distance between conflicting students in consecutive layers, a refinement procedure is applied to the final solution. This procedure is explained next.

{\small
\vspace{0.5cm}
\hrule
\vspace{0.05cm}
\noindent\textit{Local Search} 
\vspace{0.05cm}
\hrule
\begin{enumerate}[\textit{Step} 1]

  \item { Minimizing Active Edges:}
\begin{enumerate}[(i)]
   
   \item Let \( L \) be the list of students who are the end nodes of active edges;

   \item Select a student \( i \) from \( L \); \label{active}

    \item  Let \( \bar{L}_{neighbors} \) be the set of layers where the neighbors of node \( i \) are located; 

   \item Let \( \bar{L} \) be the rows that are non-adjacent to the rows in \( \bar{L}_{neighbors} \); 

   \item If \( \bar{L} \neq \emptyset \), attempt to insert \( i \) into each of the other rows in \( \bar{L} \). Specifically, given the current position of \( i \) in its row, for each row in \( \bar{L} \), swap the position of \( i \) with the seat of the student in row \( \bar{L} \) that provides the best improvement in the current best solution; 

   \item Update \( L \) by removing students for whom the conflict was resolved and adding  end nodes of new conflicting edges; 
   \item If \( L \neq \emptyset \) or no further moves can eliminate active edges, go to Step (iii). 

\end{enumerate}

  \item Maximize distance 
       \begin{enumerate}[(i)]
        \item List the active edges and identify the rows containing the students involved.
        \item For each active edge ($i, j$), calculate the distance between the initial positions of students $i$ and $j$ and the objective function, which are considered the current best values.
        \item For each active edge, iterate over all possible positions in the row of student $i$ and, for each of these, over all positions in the row of student $j$. For each combination, recalculate the distance and objective function. If the new distance is greater than the previous one and the objective function is the same or improved, store this configuration as the best solution found so far.
        \item Apply this procedure to all active edges, prioritizing configurations with greater distances and equal or better objective function values.
        \end{enumerate}

\end{enumerate}
\hrule
\vspace{0.5cm}
}

Figure \ref{fig:fig3} presents the flowchart of the proposed ILS for the SSAP. The process begins with an initial solution $s_{0}$. A preliminary local search (Swap~I) then generates the  solution $s^{*}$, which is also used as the reference solution $s^{r}$. The search is guided by a penalized objective function $f(\cdot)$ that allows handling infeasible solutions. Each iteration of the algorithm: (i) obtains a perturbed solution $s$ by applying viable random swaps with degree $\theta$ to the reference $s^{r}$; (ii) runs a local search that sequentially applies neighborhoods Swap~I--IV to reach a local optimum $s'$; and (iii) accepts $s'$ if its penalized objective value $f(s')$ improves the current best solution $s^{*}$. The loop terminates when iteration or stagnation limits are reached, or upon achieving a target solution quality. Finally, a refinement step is applied to $s^{*}$ to eliminate active edges and, if conflicts still occur in consecutive rows, increase inter-student distances without worsening $f(\cdot)$. 

%
\begin{figure}[!htb]
\centering
\includegraphics[width=0.89\textwidth]{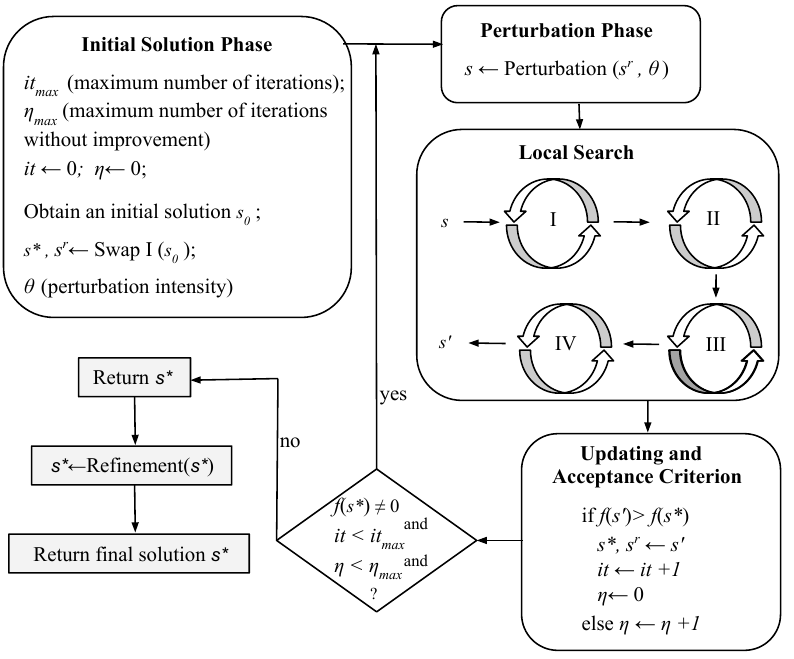}
\caption{The proposed Iterated Local Search to solve the SSAP.}
\label{fig:fig3}
\end{figure}

It is worth noting that the heuristic assumes a full classroom, where the number of students equals the number of available desks. However, it straightforwardly extends to settings where the number of students is smaller than the number of desks by introducing artificial conflict-free students to fill the remaining seats.

\section{Computational Experiments}
\label{sec:compexp}

This section presents two experiments conducted for the analysis of the proposed problem, considering the optimization solver Gurobi 10.0.3 and the introduced heuristic. As the heuristic is non-deterministic, we ran a number of independent executions to show the robustness of the proposed solution method and report the average values (gap and time). Moreover, we report the percentage of feasible solutions found in the independent executions and the average success rate.

The experiments employed two sets of instances. The first set is based on real data of a public basic education school in São José dos Campos, Brazil, consisting of graphs obtained from information collected from three junior high school classrooms, in Brazil known as the fundamental II level. These data were provided by teachers and identified through practical observation. The second set consists of 131 undirected simple graphs generated using a model based on the total number of nodes and edges for graph generation. In this case, different instances were generated by varying the total number of nodes and edges according to predefined percentages based on analyses of the real data set. Here, the balance of total students in the classroom, the proportion of nodes with conflicts, the proportion of conflicts, and the proportion of students with seating preferences were considered to define the percentage ranges for graph creation. The graphs generated in this process presented equivalent or greater complexity than the real data, allowing for a more thorough exploration of the proposed methods. Based on real-data collected from the schools, the number of available desks is exactly equal to the total number of students. Therefore, we assume $n=|S|$ in both experiments.

The first experiment employs the set of artificial instances. In this case, we assess the performance of the heuristic, contrasting its results with those obtained by Gurobi time limited in 3,600 seconds. The second experiment presents the results of the heuristic applied to the real case instances. Furthermore, an analysis of the initial solution's performance is presented in  \ref{subsec:sample_ini}.

The next section provides the details of the  instances used in the experiments.
\subsection{Real Instances}
\label{subsec:4.1}

The dataset used to generate the instances includes information from three distinct classrooms in the final years of a public basic education school in Brazil, referred to  as Classroom I, Classroom II, and Classroom III. Tables \ref{tab:tab1} and \ref{tab:tab2} summarize the main characteristics of these classrooms. 

\begin{table}[!htp]
\centering
\caption{Classroom Characteristics}\label{tab:tab1}
\small 
\begin{adjustbox}{width=\textwidth} 
\begin{tabular}{c|c|c|c|c|c|c}
\hline 
\textbf{Classroom} & \textbf{$|S|$} & \textbf{\textbf{$\sum_{i<j} c_{ij}$}
} & \textbf{$\sum_{i}\max\{r_i,0\}$} & \textbf{$\sum_{i}|\min\{r_i,0\}|$} & $|\Lambda|$ & \textbf{$n_{\lambda}$}\\ \hline
I &33  &32  &9  &2  &7 & [4, 4, 5, 6, 4, 6, 4] \\ \hline
II &32  &88  &8  &8  &7 & [4, 4, 5, 5, 5, 5, 4] \\ \hline
III &31  &53  &4  &2  &6 & [5, 5, 5, 5, 5, 6] \\ \hline
\end{tabular}
\end{adjustbox}
\end{table}
The columns of Table~\ref{tab:tab1} indicate, respectively, the label of the classroom of instances, the number of students, the number of conflicts, the number of students to be in the front and back desks, the number of rows (layer) in the classroom, and the number of students per row. Table~\ref{tab:tab2} shows the IDs of the students that require sitting at the front or back of the classroom for each classroom.

\begin{table}[!htp]
\centering
\caption{Classroom Data Summary-Assigned Seating Preferences for Students}
\small 
\begin{adjustbox}{width=\textwidth} 
\begin{tabular}{c|c|c|c}
\hline
\textbf{Data Type} & \textbf{Classroom I} & \textbf{Classroom II} & \textbf{Classroom III} \\ \hline
\textbf{\shortstack{Students \\ in Front}} & 5, 6, 8, 10, 15, 16, 19, 20, 29 & 1, 4, 6, 18, 19, 23, 25, 31 & 2, 4, 7, 21 \\ \hline
\textbf{\shortstack{Students \\ in Back}} & 21, 23 & 5, 10, 13, 22, 26, 27, 28, 29 & 3, 27 \\ \hline
\end{tabular}
\end{adjustbox}
\label{tab:tab2}
\end{table}

Table~\ref{tab:tab2} presents a summary of students’ seating preferences based on their required positions within the classroom. Each column corresponds to one of the three classrooms analyzed (Classroom I, II, and III), while the rows indicate the students who requested to be seated either at the front or at the back. The numbers listed represent the student IDs associated with these specific preferences. This information is essential for the seat allocation process, as it ensures that individual seating constraints are respected according to the classroom layout.

Figure \ref{fig:fig3.1} displays a visualization of student conflicts in a graph representation, where the vertices indicate students and edges exist if there are conflicts between them. A complete list of conflicts is available in Table~1 of the Supplementary Material.

\begin{figure}[!htb]
    \centering
    \begin{subfigure}[b]{0.44\textwidth} 
        \centering
        \includegraphics[width=\textwidth]{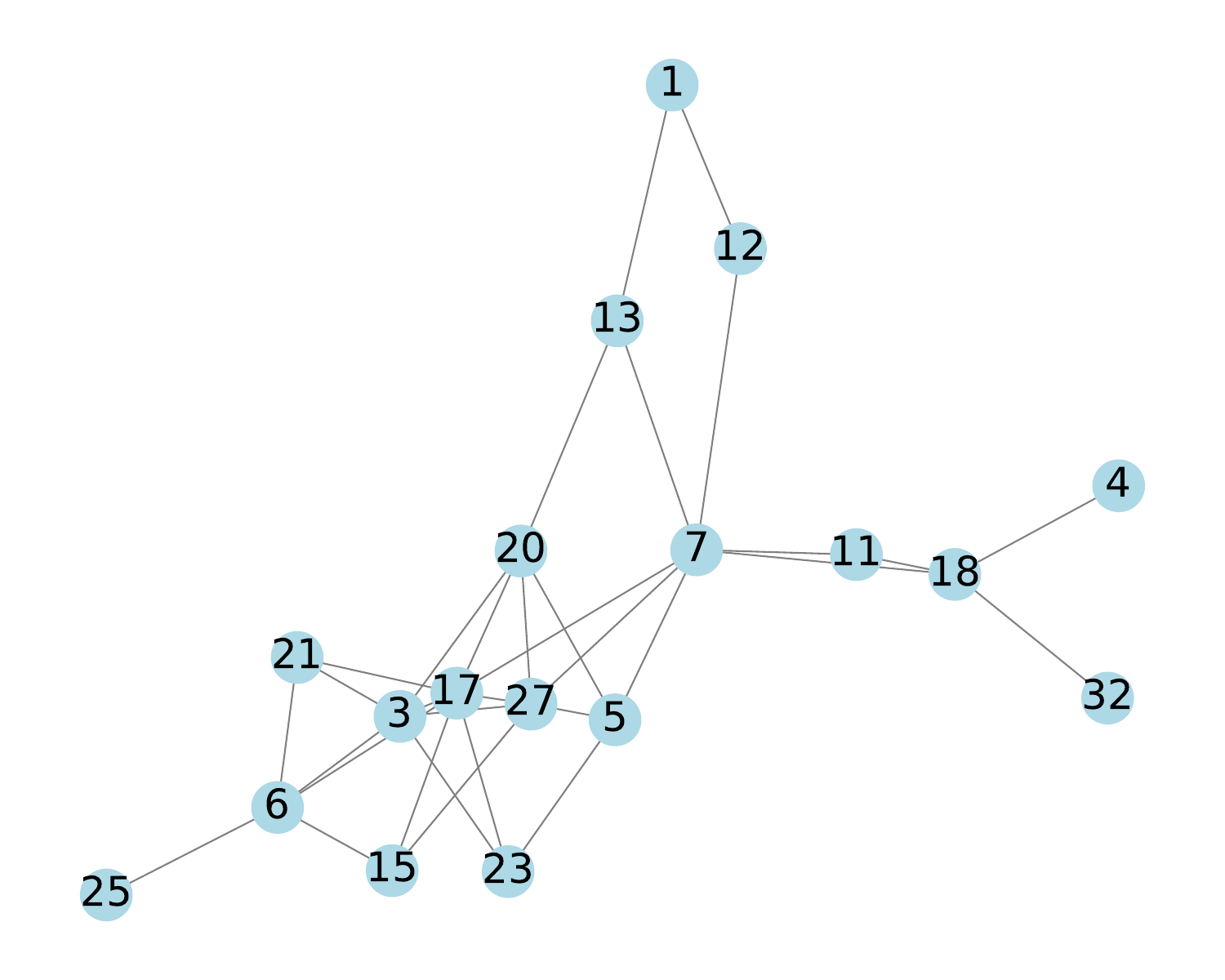}
        \caption{Graph of conflicts -- Classroom I}
        \label{fig:c1}
    \end{subfigure}
    \hspace{0.02\textwidth}
    \begin{subfigure}[b]{0.44\textwidth} 
        \centering
        \includegraphics[width=\textwidth]{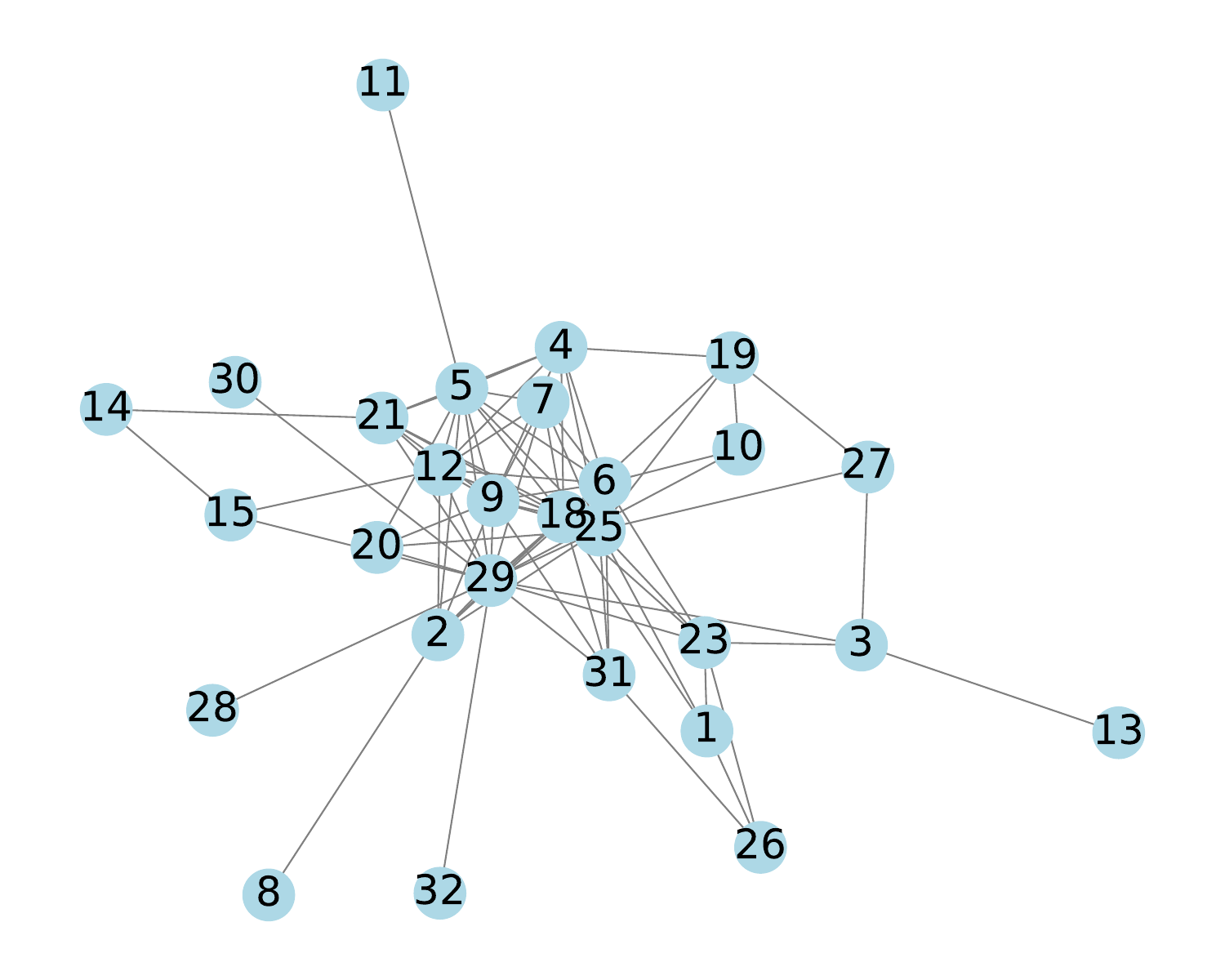}
        \caption{Graph of conflicts -- Classroom II}
        \label{fig:c2}
    \end{subfigure}
    \begin{subfigure}[b]{0.44\textwidth}
    \centering
    \includegraphics[width=\textwidth]{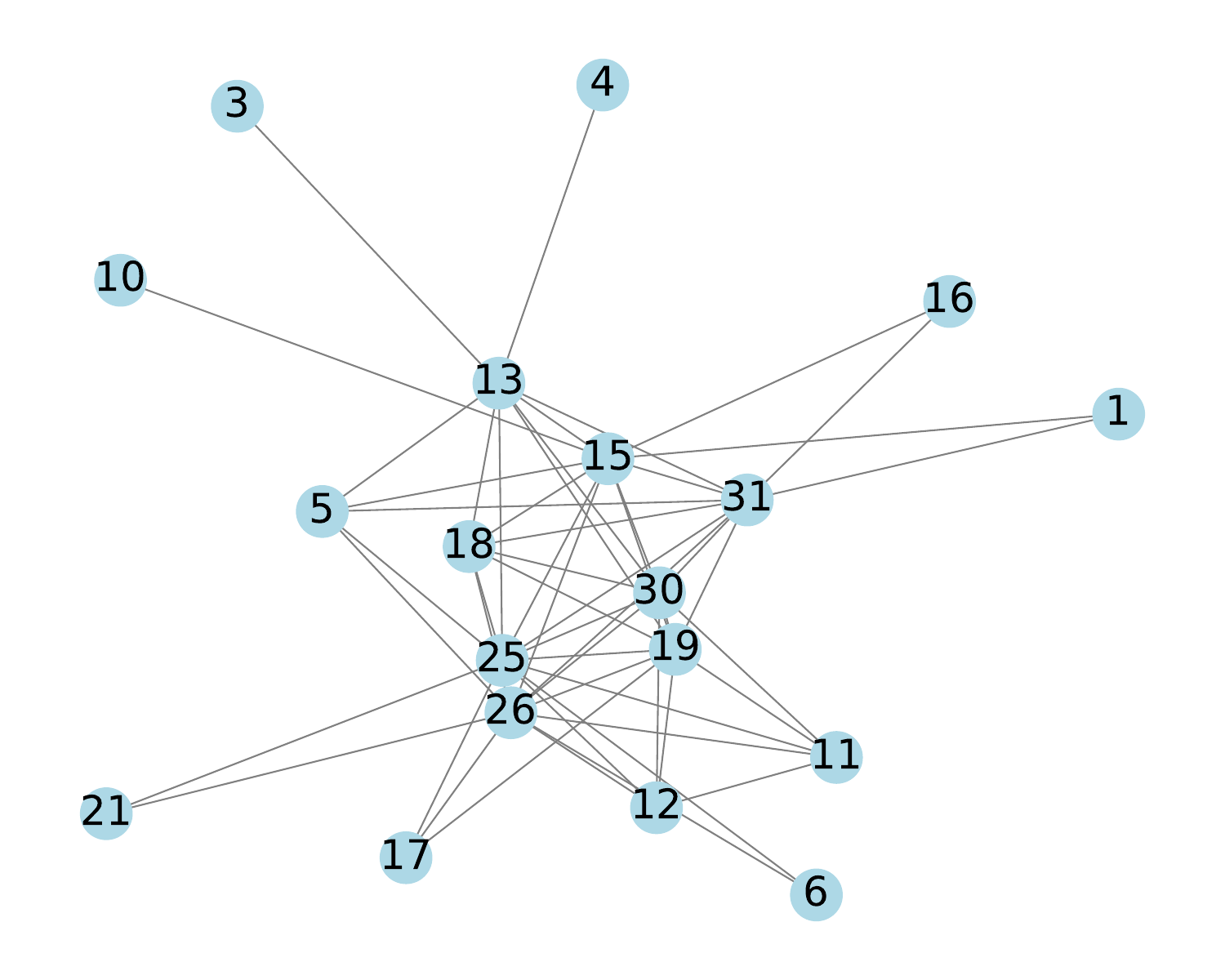}
\caption{Graph of conflicts -- Classroom III}\label{fig:c3}
   \end{subfigure}
\caption{Graphs representing the conflicts between students in three real data.}\label{fig:fig3.1}
 \end{figure}

The number of students involved in conflicts in Classroom I is 18 out of 33 students; in Classroom II is 28 out of 32 students; and in Classroom III is 19 out of 31 students.


To generate more instances to thoroughly evaluate the proposed heuristic method, we generate artificial instances. Random graph generation produces the graph of conflicts of the set of artificial instances. The next section presents a description of the data construction process.

\subsection{Artificial Instances}
\label{subsubsec:4.1.3}

The set of artificial instances was constructed using information from real classrooms in Brazilian public schools.   Table~\ref{tab:artificial_parameters} summarizes the parameters employed to generate the artificial instances, which reports the graph parameters, the layout of the class and student seating preferences.

\begin{table}[h]
\centering
{\footnotesize
\caption{Summary of the parameters used to generate artificial instances.}\label{tab:artificial_parameters}
\begin{tabular}{c|c|P{3.5cm}|P{6.0cm}}
\hline
\textbf{Cat.} & \textbf{Parameter} & \textbf{Employed Values} & \textbf{Justification / Outcome} \\
\hline
\multirow{4}{*}{Model/Instance} 
    & $n$ (students / desks) & 30, 35, 40 & Based on public school class sizes \\
\cline{2-4}
    & \% students in conflict & 35\%, 55\%, 85\%  & Controls number of students with conflicts \\
\cline{2-4}
    & \% conflicting edges & 30\%, 40\%, 50\%  & Adjusts conflict density among students \\
    \cline{2-4}
    & $d_{min}$,$d_{min}'$   & 2 &  Higher values reduce feasible placements and are limited by the shortest row\\
        \cline{2-4}
  & $\Psi$ &  $\max_{\lambda}n_{\lambda}$& A sufficiently large value, since $\Psi > \max_{\lambda}(n_{\lambda}-1)$\\
\hline
\multirow{2}{*}{Layout} 
    & Rows per class & Random: 5, 6, 7 & Reflects typical classroom layouts found in real educational settings
 \\
\cline{2-4}
    & Min. desks per row &  4 &  Avoids overlap in front/back classification \\
\hline
\multirow{2}{*}{Prefs.} 
    & Front preference rate & Random: 13\%–27\%  & Reflects realistic distribution of preferences \\
\cline{2-4}
    & Back preference rate & Random: 6\%–25\%  & Reflects realistic distribution of preferences  \\
\hline
\end{tabular}}
\end{table}

The methodology to construct the conflict graph for each instance consisted in using the information provided from real data and summarized in Table~\ref{tab:artificial_parameters} to create random graphs. For this, we used the $G_{\mathcal{NM}}$ model, considering the implementation available in the  NetworkX package. In this model, a graph is randomly selected from the set of all graphs with $\mathcal{N}$ vertices and $\mathcal{M}$ edges. To define $n$, we considered the values 30, 35, and 40, reflecting the typical maximum number of students per classroom in Brazilian public elementary schools. Therefore, the number of vertices of the conflict graph is proportional to the number of students in conflict (either 35\%, 55\%, or 85\%). The density of the conflict graph is defined by the percentage of conflicting relationships between students, set at 30\%, 40\%, or 50\%. Moreover, we impose that all vertices must have degree greater than or equal to one. 
Five graphs were generated for each of the 27 possible configurations (resulting from the combination of the number of vertices, the percentage of students in conflict, and the percentage of conflicting edges). 

After creating the conflicting graphs, the remaining parameters of the instances were defined considering the strategies and values (random or fixed values)  described in Table~\ref{tab:artificial_parameters}. 
To generate feasible classroom configurations for testing the proposed model, each row 
was required to contain at least four desks, ensuring a clear and non-overlapping distinction between front and back seats. The remaining $n - 4\cdot |\Lambda|$ seats were randomly distributed among the rows.

After excluding infeasible instances, a final total of 131 instances remained.
Tables 2 to 7  of the  Supplementary Material
provide information about the artificial instances. The data can be accessed and downloaded from the GitHub page \url{https://github.com/bcbraga/SSAP}.

\subsection{Computational and Parameter Setting}
\label{subsubsec:4.2}

Experiments with artificial instances were run in a high-performance cluster with four dedicated fat nodes, each of them equipped with two Intel Xeon E5-2667v4 processors and 512 GB of DDR3 memory.  
The second experiment, on real data,  was carried out in a personal computer to evaluate the performance of the heuristic on computers for individual use. The device was equipped with an Intel Core i5-10500H processor, featuring 6 cores and 12 threads, with a base clock speed of 2.50 GHz and turbo boost up to 3.87 GHz. 

The heuristic was implemented in Python 3.10 using PyCharm 2022.1.2 (Community Edition). Table~\ref{parametervalues} presents the  values tested and employed to the following parameters of ILS, Swap I and SSAP: perturbation intensity ($\theta$) for which random and fixed values were evaluated, being the tested fixed value the one suggested by \citep{Maximo2021}; the total number of iterations ($it_{max}$) and number of iterations without improvement ($\eta_{max}$);  the minimum distance between a pair of conflicting students  within the same row parameter (\( d_{min}\));  the penalty factor in the objective function (\( \phi \)); the  percentage of students selected per row for potential swaps in Swap I ($\psi$); and percentage of  seats evaluated in Swap I for students without any specific seating preference ($\Gamma$). 

\begin{table}[h]
\centering
{\footnotesize
\caption{Summary of parameters, values, and justifications}\label{parametervalues}
\begin{tabular}{c|c|P{4.2cm}|c|P{5.8cm}}
\hline
\textbf{Env.} & \textbf{Param.} & \textbf{Values/ Strategies Tested} &  \textbf{Employed}&\textbf{Justification / Outcome} \\
\hline
\multirow{3}{*}{ILS} 
    & $\theta$  & 
      Random: $\theta \sim \mathcal{U}[10\%, 50\%]$; Fixed: $\theta = 25\%$ & $25\%$&
      The fixed value provided more consistent and higher-quality results \\
\cline{2-5}
    & $it_{max}$ & 1,000; 3,000; 6,000; 10,000; 15,000; 21,000 &
      10,000 &       
      The employed value ensured a balance between solution quality and computational cost \\
\cline{2-5}
    & $\eta_{max}$ & 100; 500; 1,000; 1,500; 2,100; 2,800; 3,600; 4,500 & 
      500 & 
      This value controlled search stagnation and runtime \\
\hline
\multirow{2}{*}{Swap I} 
    & $\psi$ & 
      0.15; 0.25; 0.35; 0.45; 0.55; 0.65; 0.75 & 
      0.35 & This value gave best results when combined with candidate filtering \\
\cline{2-5}
    & $\Gamma$  & 
      0.15; 0.25; 0.35; 0.45; 0.55; 0.65; 0.75, with evaluation limited to $[8,\ 30]$ positions & 
      0.35 & This percentage guaranteed a satisfactory balance between solution quality and computational effort \\
\hline
\end{tabular}}
\label{tab:parameter_summary}
\end{table}

To set the perturbation degree $\theta$, we conducted an experimental analysis comparing a random selection drawn from the uniform distribution over $[10\%, 50\%]$ against a fixed value of $\theta = 25\%$. The latter strategy yielded better results, as suggested by \citep{Maximo2021}.

To ensure rigorous and systematic calibration of the remaining ILS parameters — $it_{max}$, $\eta_{max}$, $\psi$, and $\Gamma$ — we employed the automated configuration tool irace \citep{LOPEZIBANEZ201643}. The candidate configurations evaluated are reported in Table~\ref{parametervalues}, and the complete set of instances was used throughout this tuning phase.

The computational budget parameters $it_{max}$ and $\eta_{max}$ converged to 10{,}000 and 500, respectively. For the Swap I neighborhood, irace selected $\psi = 0.35$ as the threshold for random diversification target selection and $\Gamma = 0.35$ for candidate position filtering.

The penalty parameter $\phi$ is calibrated to guarantee that an infeasible solution is never evaluated more favorably than any feasible solution. For any feasible solution, the upper bound of the objective function is $0$ (occurring when no active edges exist in the assignment). A straightforward lower bound occurs when all edges are active and the distance between every pair of students is  $d_{min}$. In this scenario, the lower bound of the objective function is $|E|(d_{min} - \Psi)$. We parametrized  $\Psi$ to ensure $\Psi > d_{min}$; consequently, this lower bound is strictly negative. Since the minimum positive value for the total constraint violation $(\alpha+\beta+\gamma+\delta)$ is $1$, we define the penalty $\phi$ based on the worst-case (lowest) possible value of the objective function. Specifically, we set $\phi = 2 \cdot |E||(d_{min}-\Psi)|$. This conservative scaling ensures that any local search movement reducing constraint violations is strictly prioritized, thereby effectively guiding the refinement process toward a feasible solution space.

\subsection{Experiment I}
\label{subsec:sample_5_2}

This experiment reports the performance of ILS compared to Gurobi 10.0.3 time limited in 3600 seconds on the 131 artificial instances.
The performance metrics employed to compare the results obtained by the heuristic and the optimization solver are basically gap and execution time. The gap is computed with respect to the best-known solution (BKS) obtained either in a given run of ILS or Gurobi. Equation~\eqref{eq:gap_formula} presents how the gap is computed,  which is the difference between the  solution achieved by the evaluated method ($z_{primal}$) and the best-known solution ($z_{BKS}$).

\begin{equation}
gap = \frac{|z_{BKS} - z_{primal}|}{|z_{primal}| + 10^{-10} }
\label{eq:gap_formula}
\end{equation}

A smaller gap value indicates that the employed method is performing well and finding solutions very close to the best available reference. The addition of $10^{-10}$ is a practical numerical trick to make the formula robust against edge cases where the denominator might otherwise be zero.

For the heuristic, 30 independent executions were performed per instance. The reported gap and execution time correspond to the average over runs that produced feasible solutions. The feasibility rate (Fea) indicates the percentage of feasible runs out of the 30 executions for each instance.

Table~\ref{tab:table_DII_new} presents these performance metrics for ILS alongside the values obtained by Gurobi. In these tables, lighter gray cells highlight instances where the solver achieved better objective values than the heuristic's average over 30 runs. Darker gray cells, in contrast, indicate instances where the heuristic presented a better gap than the solver.

Let us first compare the performance of ILS and  the Gurobi solver. To facilitate the discussion, the 131 instances are divided according to their structural complexity into small- and medium-scale scenarios (IDs 1–30, 46–75, and 90–119) and large-scale, high-complexity scenarios (IDs 31–45, 76–89, and 120–131). This analysis focuses on three main aspects: the ability to find feasible solutions, the quality of the obtained solutions (measured by the gap), and the computational time.


Both approaches converged to the optimal solution in 78 small- and medium-scale instances, representing 60\% of the benchmark set. In these scenarios, Gurobi achieved optimality within seconds, requiring at most 10.50 seconds (Instance 118), while the full ILS framework matched this solution quality in a fraction of that time — running in a few hundredths of a second in the vast majority of cases, despite a maximum average execution time of 12.68 seconds observed for Instance 72.

For large-scale instances, Gurobi exhausted the time limit, returning feasible solutions without proving optimality, in 34 instances (26\% of the benchmark set). These cases are concentrated among the groups with the largest number of conflicting students: instances 32, 35–45, 76, 77, 79–89, and 121–131. Despite reaching the time limit, Gurobi closed the optimality gap in the majority of instances within IDs 31–45 and achieved very low gaps across the 76–89 subset, though it frequently required the full time budget to do so. Within the first complex subset (IDs 31–45), ILS produced its highest and most variable gaps, ranging from 0.30 (Instance 44) to 0.97 (Instances 33 and 39). In contrast, for the second subset (IDs 76–89), heuristic performance stabilized at lower gap values, remaining mostly below 0.43 (Instance 84), with several instances yielding even smaller gaps. The final subset (IDs 120–131) reveals a more balanced picture. Gurobi solved Instances 120 and 123 to proven optimality within the time limit, while for the remaining instances in this block, the gaps of both methods remained relatively low — generally below 0.54 — except for two gaps of 1.00 observed for Instances 120 and 123 under ILS. Overall, across the full benchmark set, Gurobi achieved a better gap in 45 instances (34\% of the instances), whereas ILS outperformed the solver in 8 instances (6\% of the instances): 83, 88, 89, 127, 128, 129, 130, and 131, highlighted in darker gray in Table~\ref{tab:table_DII_new}.

It should be noted, however, that the reported ILS gaps represent averages over 30 independent runs and may therefore be inflated by occasional unsuccessful executions. Examining the individual runs — detailed in the Supplementary Material — reveals that ILS frequently attained the optimal solution (gap equal to zero) even for large-scale instances in which Gurobi achieved a better average gap. Notably, ILS found the global optimum in 7 out of 30 runs for Instance 31, in 19 out of 30 runs for Instance 34, and in 20 out of 30 runs for Instance 78. This pattern is not isolated to these cases, as evidenced throughout Tables 8 and 9 of the Supplementary Material, and demonstrates that the heuristic possesses the robustness required to converge to the exact global optimum under dense conflict constraints, offering a competitive alternative at a fraction of the computational cost.

As an overall assessment, ILS obtained feasible solutions across all 30 runs for approximately 86\% of the instances. With respect to computational time, ILS was faster than Gurobi in the vast majority of cases; the commercial solver only outperformed the heuristic in Instances 34, 71, 72, 78, 109, 114, 116, and 119, where optimal solutions were reached very quickly.

\begin{landscape}
\renewcommand{\arraystretch}{1.3}
\begin{table}[!htp]
\centering
\caption{Performance measures by instance from the solutions obtained by ILS and Gurobi.}
\label{tab:table_DII_new}
\fontsize{10}{10}\selectfont
\resizebox{1.3\textwidth}{!}{
\begin{tabular}{c|c|c|c|c|c|c||c|c|c|c|c|c|c||c|c|c|c|c|c|c}
\hline
\textbf{ID} & \makecell{\textbf{ILS} \\ \textbf{gap}} & \makecell{\textbf{Gurobi} \\ \textbf{gap}} & \textbf{BKS} & \makecell{\textbf{ILS} \\ \textbf{Time}} & \makecell{\textbf{Gurobi} \\ \textbf{Time}} & \textbf{\(\%\) Fea} & \textbf{ID} & \makecell{\textbf{ILS} \\ \textbf{gap}} & \makecell{\textbf{Gurobi} \\ \textbf{gap}} & \textbf{BKS} & \makecell{\textbf{ILS} \\ \textbf{Time}} & \makecell{\textbf{Gurobi} \\ \textbf{Time}} & \textbf{\(\%\) Fea} & \textbf{ID} & \makecell{\textbf{ILS} \\ \textbf{gap}} & \makecell{\textbf{Gurobi} \\ \textbf{gap}} & \textbf{BKS} & \makecell{\textbf{ILS} \\ \textbf{Time}} & \makecell{\textbf{Gurobi} \\ \textbf{Time}} & \textbf{\(\%\) Fea} \\ \hline
1 & 0.00 & 0.00 & 0.00 & 0.02 & 0.51 & 100.00 & 45 & 0.19 & \cellcolor{lightgray} 0.00 & -29.00 & 20.95 & 3604.74 & 50.00 & 89 & \cellcolor{gray} 0.20 & 0.22 & -68.00 & 49.73 & 3609.32 & 63.33 \\
2 & 0.00 & 0.00 & 0.00 & 0.02 & 0.67 & 100.00 & 46 & 0.00 & 0.00 & 0.00 & 0.02 & 1.55 & 100.00 & 90 & 0.00 & 0.00 & 0.00 & 0.03 & 2.01 & 100.00 \\
3 & 0.00 & 0.00 & 0.00 & 0.01 & 0.89 & 100.00 & 47 & 0.00 & 0.00 & 0.00 & 0.03 & 1.36 & 100.00 & 91 & 0.00 & 0.00 & 0.00 & 0.07 & 2.63 & 100.00 \\
4 & 0.00 & 0.00 & 0.00 & 0.01 & 0.52 & 100.00 & 48 & 0.00 & 0.00 & 0.00 & 0.03 & 1.34 & 100.00 & 92 & 0.00 & 0.00 & 0.00 & 0.07 & 2.41 & 100.00 \\
5 & 0.00 & 0.00 & 0.00 & 0.01 & 0.59 & 100.00 & 49 & 0.00 & 0.00 & 0.00 & 0.02 & 1.17 & 100.00 & 93 & 0.00 & 0.00 & 0.00 & 0.06 & 2.52 & 100.00 \\
6 & 0.00 & 0.00 & 0.00 & 0.02 & 0.96 & 100.00 & 50 & 0.00 & 0.00 & 0.00 & 0.02 & 2.53 & 100.00 & 94 & 0.00 & 0.00 & 0.00 & 0.03 & 2.34 & 100.00 \\
7 & 0.00 & 0.00 & 0.00 & 0.02 & 0.80 & 100.00 & 51 & 0.00 & 0.00 & 0.00 & 0.02 & 2.46 & 100.00 & 95 & 0.00 & 0.00 & 0.00 & 0.04 & 2.99 & 100.00 \\
8 & 0.00 & 0.00 & 0.00 & 0.01 & 0.84 & 100.00 & 52 & 0.00 & 0.00 & 0.00 & 0.02 & 2.43 & 100.00 & 96 & 0.00 & 0.00 & 0.00 & 0.03 & 2.55 & 100.00 \\
9 & 0.00 & 0.00 & 0.00 & 0.01 & 0.67 & 100.00 & 53 & 0.00 & 0.00 & 0.00 & 0.02 & 2.19 & 100.00 & 97 & 0.00 & 0.00 & 0.00 & 0.05 & 2.16 & 100.00 \\
10 & 0.00 & 0.00 & 0.00 & 0.01 & 1.24 & 100.00 & 54 & 0.00 & 0.00 & 0.00 & 0.02 & 2.62 & 100.00 & 98 & 0.00 & 0.00 & 0.00 & 0.04 & 2.29 & 100.00 \\
11 & 0.00 & 0.00 & 0.00 & 0.02 & 0.74 & 100.00 & 55 & 0.00 & 0.00 & 0.00 & 0.02 & 2.41 & 100.00 & 99 & 0.00 & 0.00 & 0.00 & 0.03 & 2.74 & 100.00 \\
12 & 0.00 & 0.00 & 0.00 & 0.02 & 0.90 & 100.00 & 56 & 0.00 & 0.00 & 0.00 & 0.04 & 4.28 & 100.00 & 100 & 0.00 & 0.00 & 0.00 & 0.04 & 2.71 & 100.00 \\
13 & 0.00 & 0.00 & 0.00 & 0.02 & 1.00 & 100.00 & 57 & 0.00 & 0.00 & 0.00 & 0.04 & 3.85 & 100.00 & 101 & 0.00 & 0.00 & 0.00 & 0.07 & 3.30 & 100.00 \\
14 & 0.00 & 0.00 & 0.00 & 0.02 & 0.75 & 100.00 & 58 & 0.00 & 0.00 & 0.00 & 0.02 & 3.59 & 100.00 & 102 & 0.00 & 0.00 & 0.00 & 0.08 & 3.54 & 100.00 \\
15 & 0.00 & 0.00 & 0.00 & 0.01 & 0.93 & 100.00 & 59 & 0.00 & 0.00 & 0.00 & 0.05 & 2.99 & 100.00 & 103 & 0.00 & 0.00 & 0.00 & 0.09 & 3.54 & 100.00 \\
16 & 0.00 & 0.00 & 0.00 & 0.06 & 1.86 & 100.00 & 60 & 0.00 & 0.00 & 0.00 & 0.02 & 2.67 & 100.00 & 104 & 0.00 & 0.00 & 0.00 & 0.04 & 3.48 & 100.00 \\
17 & 0.00 & 0.00 & 0.00 & 0.02 & 1.29 & 100.00 & 61 & 0.00 & 0.00 & 0.00 & 0.11 & 4.86 & 100.00 & 105 & 0.00 & 0.00 & 0.00 & 0.97 & 6.00 & 100.00 \\
18 & 0.00 & 0.00 & 0.00 & 0.03 & 1.43 & 100.00 & 62 & 0.00 & 0.00 & 0.00 & 0.07 & 4.96 & 100.00 & 106 & 0.00 & 0.00 & 0.00 & 1.45 & 5.17 & 100.00 \\
19 & 0.00 & 0.00 & 0.00 & 0.03 & 1.92 & 100.00 & 63 & 0.00 & 0.00 & 0.00 & 0.51 & 6.20 & 100.00 & 107 & 0.00 & 0.00 & 0.00 & 0.08 & 4.89 & 100.00 \\
20 & 0.00 & 0.00 & 0.00 & 0.03 & 1.76 & 100.00 & 64 & 0.00 & 0.00 & 0.00 & 0.16 & 4.65 & 100.00 & 108 & 0.00 & 0.00 & 0.00 & 0.09 & 5.12 & 100.00 \\
21 & 0.00 & 0.00 & 0.00 & 0.04 & 1.88 & 100.00 & 65 & 0.00 & 0.00 & 0.00 & 0.05 & 4.71 & 100.00 & 109 & 0.40 & \cellcolor{lightgray} 0.00 & 0.00 & 13.60 & 7.29 & 100.00 \\
22 & 0.00 & 0.00 & 0.00 & 0.05 & 2.37 & 100.00 & 66 & 0.00 & 0.00 & 0.00 & 0.22 & 6.54 & 100.00 & 110 & 0.03 & \cellcolor{lightgray} 0.00 & 0.00 & 3.43 & 6.16 & 100.00 \\
23 & 0.00 & 0.00 & 0.00 & 0.05 & 1.85 & 100.00 & 67 & 0.07 & \cellcolor{lightgray} 0.00 & 0.00 & 1.73 & 8.49 & 100.00 & 111 & 0.00 & 0.00 & 0.00 & 0.65 & 8.95 & 100.00 \\
24 & 0.00 & 0.00 & 0.00 & 0.09 & 2.08 & 100.00 & 68 & 0.00 & 0.00 & 0.00 & 1.12 & 5.59 & 100.00 & 112 & 0.00 & 0.00 & 0.00 & 0.25 & 5.68 & 100.00 \\
25 & 0.00 & 0.00 & 0.00 & 0.12 & 2.03 & 100.00 & 69 & 0.00 & 0.00 & 0.00 & 0.32 & 3.67 & 100.00 & 113 & 0.13 & \cellcolor{lightgray} 0.00 & 0.00 & 8.28 & 10.11 & 100.00 \\
26 & 0.00 & 0.00 & 0.00 & 0.18 & 2.53 & 100.00 & 70 & 0.00 & 0.00 & 0.00 & 0.13 & 3.81 & 100.00 & 114 & 0.40 & \cellcolor{lightgray} 0.00 & 0.00 & 12.25 & 8.18 & 100.00 \\
27 & 0.03 & \cellcolor{lightgray} 0.00 & 0.00 & 0.53 & 3.03 & 100.00 & 71 & 0.20 & \cellcolor{lightgray} 0.00 & 0.00 & 7.17 & 4.73 & 100.00 & 115 & 0.00 & 0.00 & 0.00 & 3.93 & 8.42 & 100.00 \\
28 & 0.00 & 0.00 & 0.00 & 0.61 & 2.80 & 100.00 & 72 & 0.50 & \cellcolor{lightgray} 0.00 & 0.00 & 12.68 & 6.81 & 100.00 & 116 & 0.17 & \cellcolor{lightgray} 0.00 & 0.00 & 9.28 & 8.57 & 100.00 \\
29 & 0.00 & 0.00 & 0.00 & 1.00 & 2.22 & 100.00 & 73 & 0.03 & \cellcolor{lightgray} 0.00 & 0.00 & 1.20 & 4.52 & 100.00 & 117 & 0.03 & \cellcolor{lightgray} 0.00 & 0.00 & 5.45 & 8.59 & 100.00 \\
30 & 0.00 & 0.00 & 0.00 & 1.00 & 2.81 & 100.00 & 74 & 0.00 & 0.00 & 0.00 & 2.42 & 5.64 & 100.00 & 118 & 0.00 & 0.00 & 0.00 & 0.12 & 10.50 & 100.00 \\
31 & 0.77 & \cellcolor{lightgray} 0.00 & 0.00 & 12.76 & 13.11 & 100.00 & 75 & 0.00 & 0.00 & 0.00 & 0.22 & 4.06 & 100.00 & 119 & 0.40 & \cellcolor{lightgray} 0.00 & 0.00 & 17.26 & 8.85 & 100.00 \\
32 & 0.33 & \cellcolor{lightgray} 0.00 & -18.00 & 21.56 & 3604.02 & 100.00 & 76 & 0.13 & \cellcolor{lightgray} 0.00 & -88.00 & 38.93 & 3607.48 & 93.33 & 120 & 1.00 & \cellcolor{lightgray} 0.00 & 0.00 & 56.84 & 2105.74 & 100.00 \\
33 & 0.97 & \cellcolor{lightgray} 0.00 & 0.00 & 13.41 & 16.03 & 100.00 & 77 & 0.39 & \cellcolor{lightgray} 0.00 & -22.00 & 39.31 & 3607.31 & 100.00 & 121 & 0.54 & \cellcolor{lightgray} 0.37 & -12.00 & 58.74 & 3609.55 & 100.00 \\
34 & 0.37 & \cellcolor{lightgray} 0.00 & 0.00 & 7.43 & 4.72 & 100.00 & 78 & 0.33 & \cellcolor{lightgray} 0.00 & 0.00 & 16.82 & 8.94 & 100.00 & 122 & 0.39 & \cellcolor{lightgray} 0.00 & -41.00 & 77.00 & 3611.41 & 100.00 \\
35 & 0.53 & \cellcolor{lightgray} 0.00 & -8.00 & 20.75 & 3603.88 & 100.00 & 79 & 0.38 & \cellcolor{lightgray} 0.18 & -14.00 & 43.16 & 3606.02 & 100.00 & 123 & 1.00 & \cellcolor{lightgray} 0.00 & 0.00 & 63.94 & 3039.65 & 100.00 \\
36 & 0.86 & \cellcolor{lightgray} 0.00 & -1.00 & 17.13 & 3604.50 & 100.00 & 80 & 0.34 & \cellcolor{lightgray} 0.06 & -17.00 & 37.90 & 3606.71 & 100.00 & 124 & 0.25 & \cellcolor{lightgray} 0.16 & -48.00 & 72.94 & 3611.62 & 96.67 \\
37 & 0.54 & \cellcolor{lightgray} 0.00 & -7.00 & 18.95 & 3603.73 & 100.00 & 81 & 0.20 & \cellcolor{lightgray} 0.16 & -114.00 & 54.72 & 3610.47 & 73.33 & 125 & 0.31 & \cellcolor{lightgray} 0.17 & -59.00 & 89.31 & 3613.53 & 100.00 \\
38 & 0.45 & \cellcolor{lightgray} 0.00 & -20.00 & 29.16 & 3604.73 & 100.00 & 82 & 0.28 & \cellcolor{lightgray} 0.23 & -66.00 & 50.04 & 3609.48 & 100.00 & 126 & 0.19 & \cellcolor{lightgray} 0.15 & -105.00 & 101.83 & 3615.04 & 100.00 \\
39 & 0.97 & \cellcolor{lightgray} 0.00 & 0.00 & 16.44 & 33.09 & 100.00 & 83 & \cellcolor{gray} 0.29 & 0.32 & -39.00 & 42.95 & 3608.31 & 100.00 & 127 & \cellcolor{gray} 0.25 & 0.37 & -61.00 & 88.95 & 3613.23 & 96.67 \\
40 & 0.42 & \cellcolor{lightgray} 0.00 & -14.00 & 22.30 & 3604.39 & 100.00 & 84 & 0.43 & \cellcolor{lightgray} 0.00 & -13.00 & 34.50 & 3606.87 & 100.00 & 128 & \cellcolor{gray} 0.17 & 0.22 & -175.00 & 108.50 & 3616.59 & 93.33 \\
41 & 0.32 & \cellcolor{lightgray} 0.00 & -22.00 & 24.01 & 3604.86 & 60.00 & 85 & 0.39 & \cellcolor{lightgray} 0.24 & -37.00 & 44.79 & 3608.34 & 93.33 & 129 & \cellcolor{gray} 0.00 & 0.16 & -307.00 & 95.94 & 3620.01 & 3.33 \\
42 & 0.36 & \cellcolor{lightgray} 0.06 & -16.00 & 24.46 & 3604.53 & 73.33 & 86 & 0.30 & \cellcolor{lightgray} 0.24 & -39.00 & 48.08 & 3608.42 & 60.00 & 130 & \cellcolor{gray} 0.25 & 0.41 & -88.00 & 91.59 & 3614.66 & 90.00 \\
43 & 0.32 & \cellcolor{lightgray} 0.00 & -50.00 & 25.04 & 3605.41 & 73.33 & 87 & 0.20 & \cellcolor{lightgray} 0.12 & -85.00 & 48.81 & 3610.04 & 56.67 & 131 & \cellcolor{gray} 0.14 & 0.30 & -146.00 & 129.32 & 3616.77 & 30.00 \\
44 & 0.30 & \cellcolor{lightgray} 0.09 & -32.00 & 27.78 & 3605.43 & 50.00 & 88 & \cellcolor{gray} 0.28 & 0.33 & -60.00 & 51.36 & 3608.70 & 96.67 & - & - & - & - & - & - & - \\ \hline
\end{tabular}
}
\end{table}
\end{landscape}



\subsection{Experiment II: Case Studies}
\label{subsec:sample_5_3}

This section describes the experiment applying the ILS heuristic to three real-world instances: Classrooms I, II, and III. In this experiment, the number of independent runs was ten, sufficient to evaluate the robustness of the heuristic. 
All runs yielded identical optimal objective values (zero) and similar runtimes in Classrooms I and III. In Classroom II, the final four runs achieved the same objective value and execution time, confirming the method's robustness.
Table~\ref{tab:res_runs} reports a single representative result for each instance.

\begin{table}[!htp]
\centering
\small
\caption{Comparison of objective values and execution times between ILS and Gurobi.}
\label{tab:res_runs}
\label{tab:tab4.1}

\begin{tabular}{c|c|c|c|c}
\hline
\textbf{Classroom} & \textbf{\shortstack{OF Gurobi}} & \textbf{\shortstack{Time(s) Gurobi}} & \textbf{\shortstack{OF ILS}} & \textbf{\shortstack{Time(s) ILS}} \\ \hline
I &0  &0.95  &0  &0.02   \\ 
II &0  &2.84  &0  &0.67 \\ 
III &0  & 1.91 &0  &0.05 \\ \hline
\end{tabular}

\end{table}


Both methods reached optimal solutions for the three instances. ILS required significantly lower execution times, particularly for Classrooms I and III. Although it took slightly longer for Classroom II, ILS was still nearly four times faster than Gurobi.

Finally, we compare the seating mappings generated by  the ILS heuristic with those manually designed by a team of teachers. Figure~\ref{fig:ils_teachers_classrooms} presents these side-by-side comparisons for the three real-world classroom scenarios.



\begin{figure}[htp]
    \centering

    \begin{subfigure}[b]{0.48\textwidth}
        \centering
        \includegraphics[width=\textwidth]{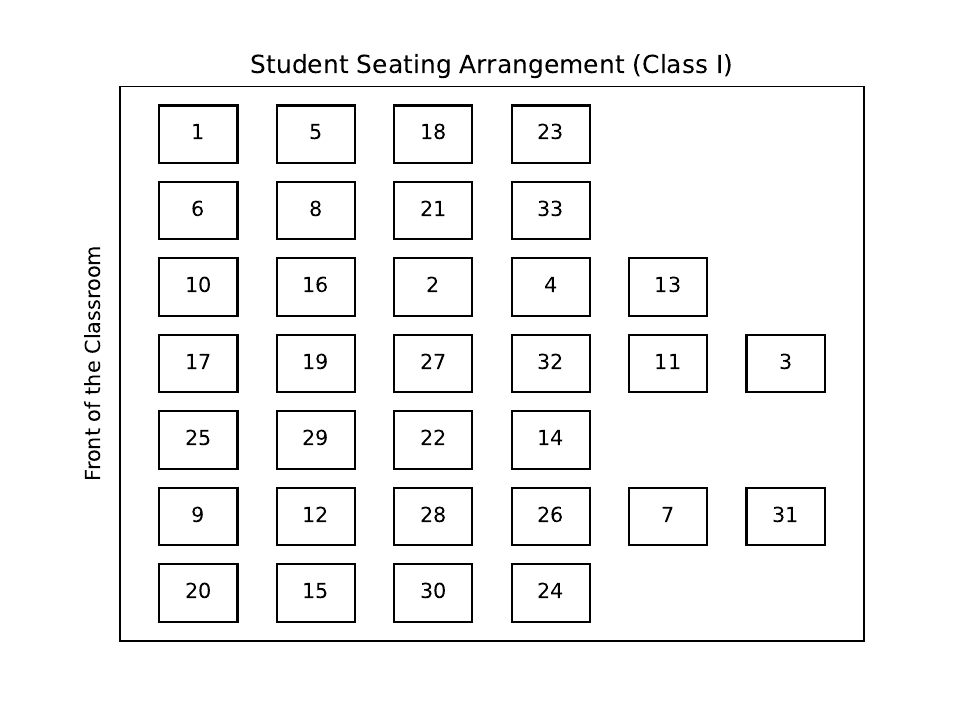}
        \caption{ILS - Classroom I}
        \label{fig:4.1c2}
    \end{subfigure}
    \hfill
    \begin{subfigure}[b]{0.48\textwidth}
        \centering
        \includegraphics[width=\textwidth]{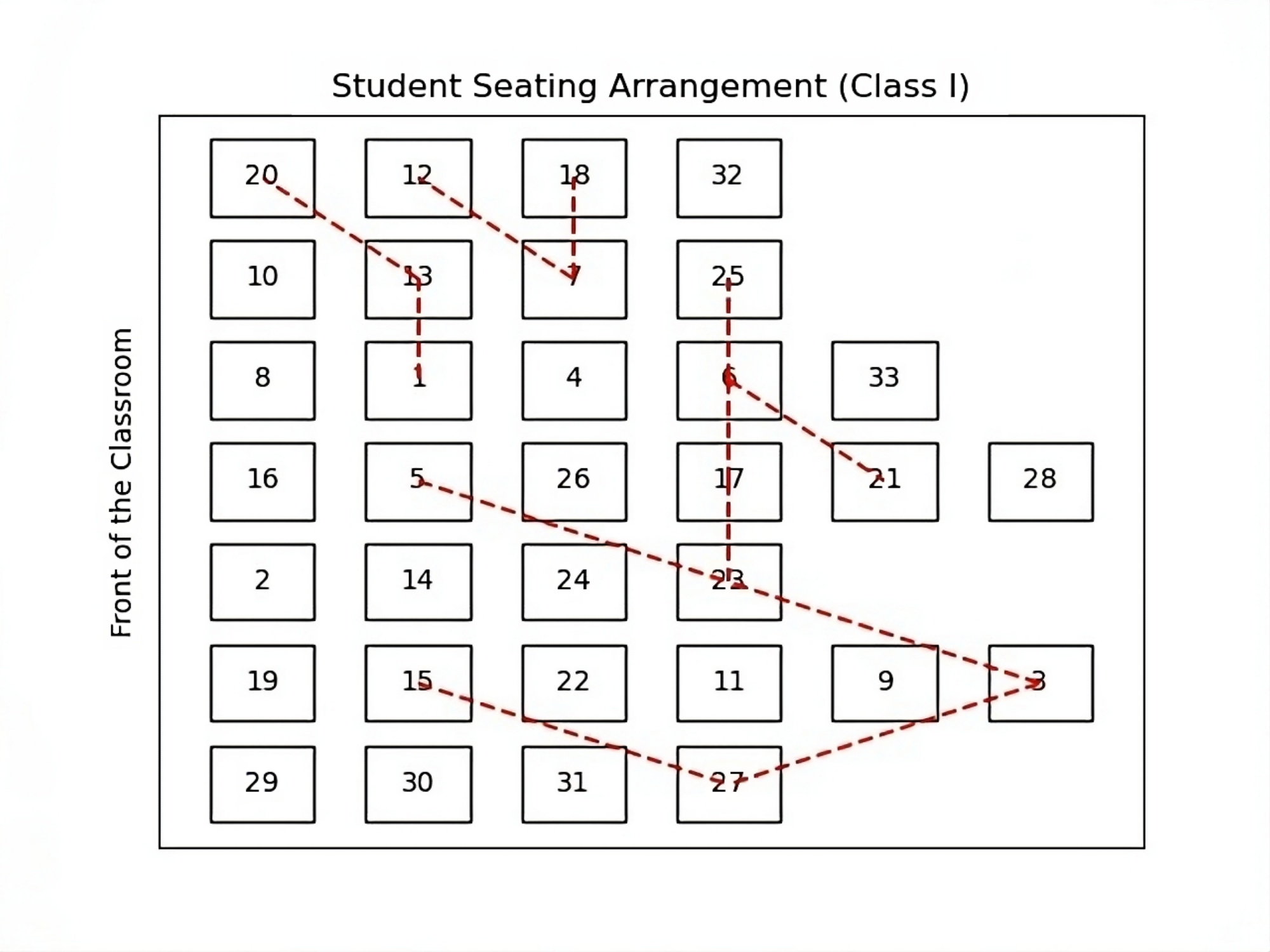}
        \caption{Teachers - Classroom I}
        \label{fig:4.1c3}
    \end{subfigure}

    \vspace{1em}

    \begin{subfigure}[b]{0.48\textwidth}
        \centering
        \includegraphics[width=\textwidth]{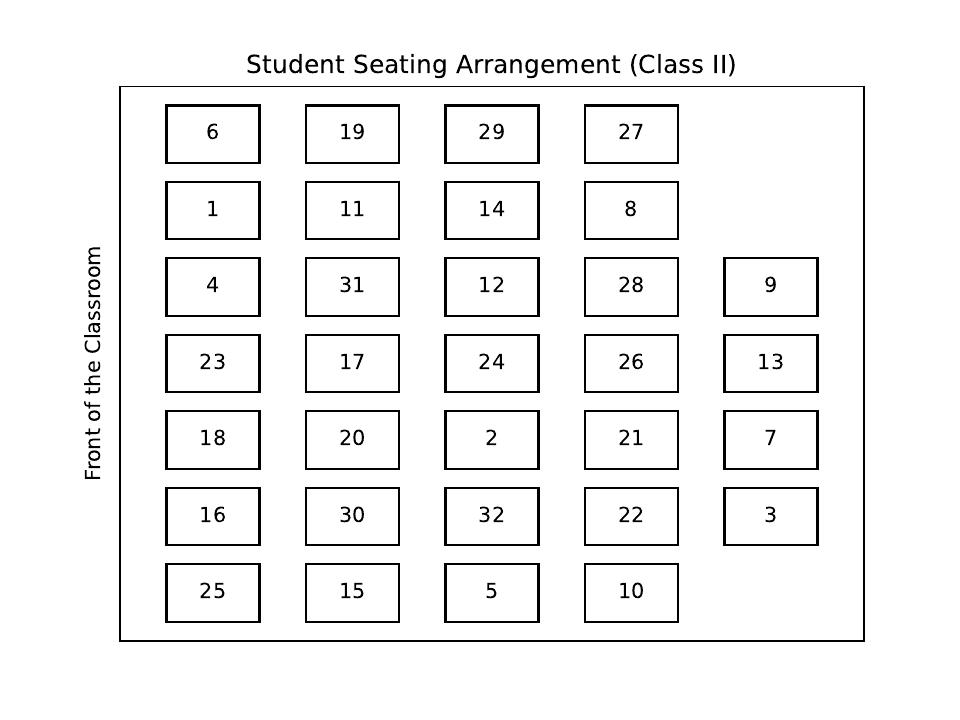}
        \caption{ILS - Classroom II}
        \label{fig:4.2c2}
    \end{subfigure}
    \hfill
    \begin{subfigure}[b]{0.48\textwidth}
        \centering
        \includegraphics[width=\textwidth]{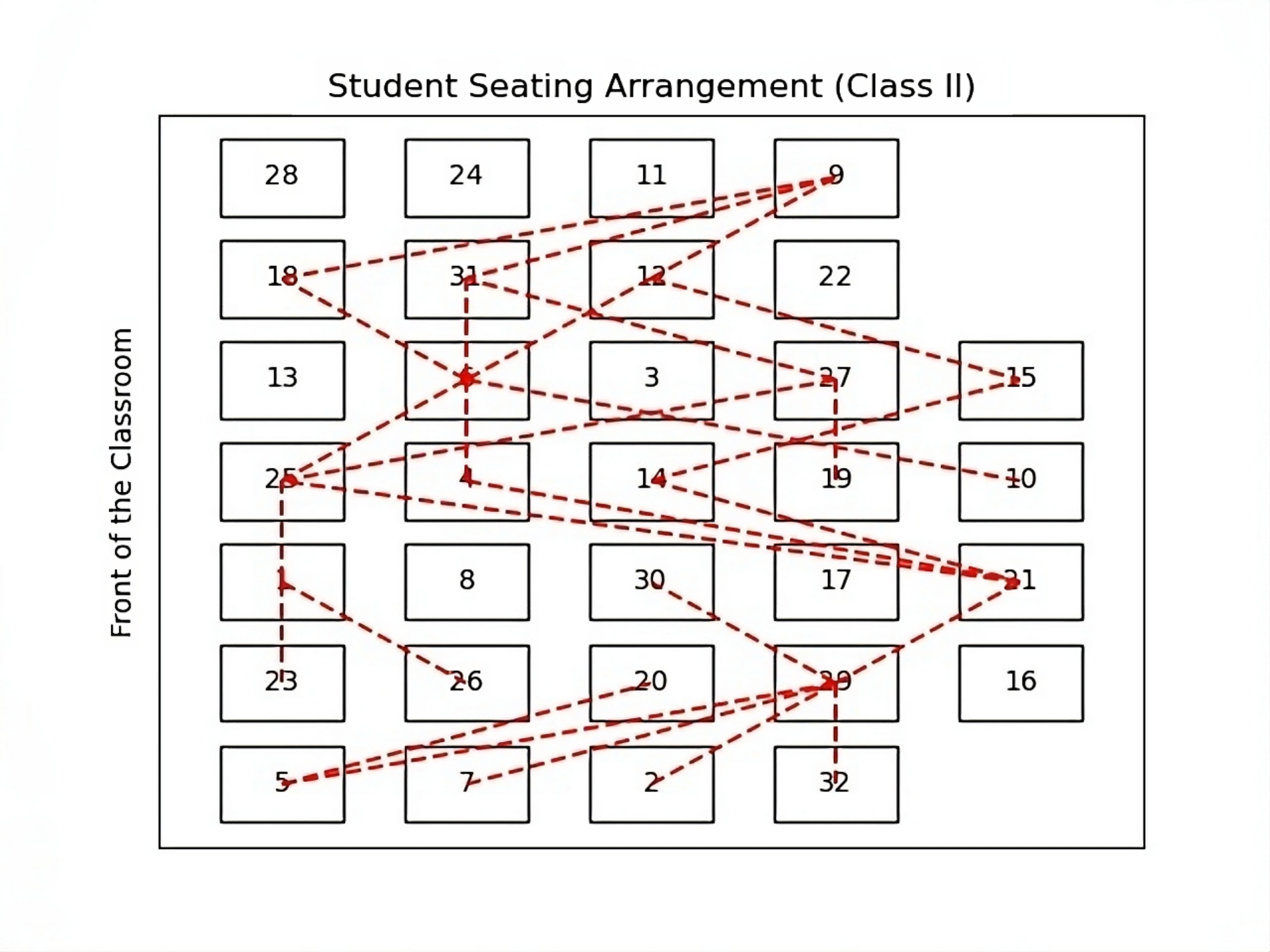}
        \caption{Teachers - Classroom II}
        \label{fig:4.2c3}
    \end{subfigure}

    \vspace{1em}

    \begin{subfigure}[b]{0.48\textwidth}
        \centering
        \includegraphics[width=\textwidth]{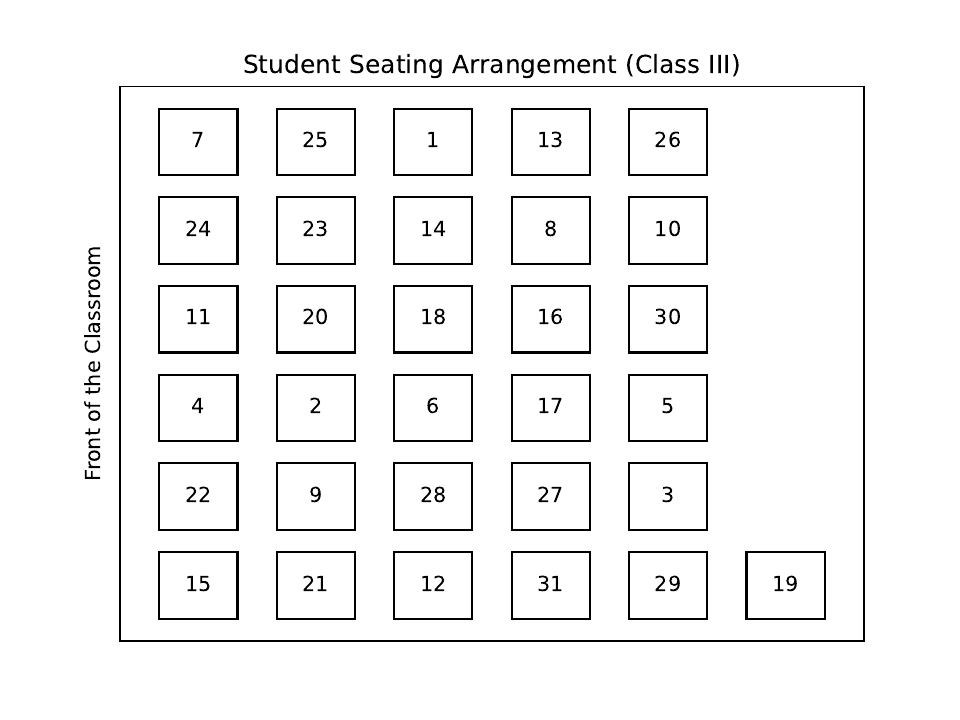}
        \caption{ILS - Classroom III}
        \label{fig:4.3c2}
    \end{subfigure}
    \hfill
    \begin{subfigure}[b]{0.48\textwidth}
        \centering
        \includegraphics[width=\textwidth]{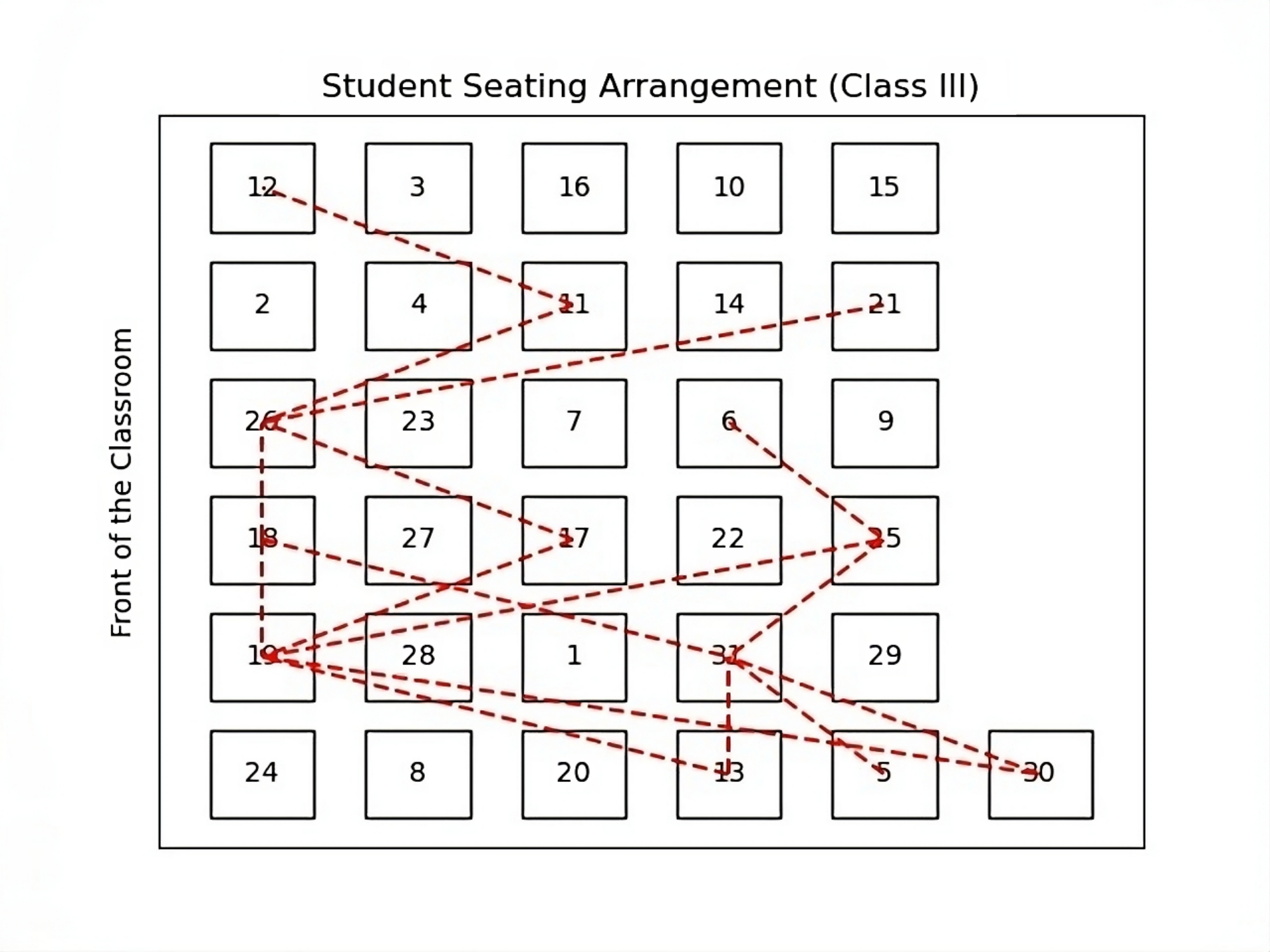}
        \caption{Teachers - Classroom III}
        \label{fig:4.3c3}
    \end{subfigure}

    \caption{Representation of student seat mapping for Classrooms I, II, and III: (a, c, and e) generated by ILS, and (b, d, and f) proposed by a group of teachers. Red lines indicate active edges in the conflict graph.}
    \label{fig:ils_teachers_classrooms}
\end{figure}


According to Figure~\ref{fig:ils_teachers_classrooms}, the ILS heuristic successfully generated classroom mappings that consistently avoided conflicts between students. This visual evidence not only reinforces the ILS's greater efficiency in achieving near-optimal solutions, as previously detailed in Table~\ref{tab:res_runs}, but also starkly contrasts with manual layouts. While teachers strive to minimize conflicts using diverse strategies, the complexity of the task often limits their ability to achieve an ideal solution.  ILS consistently produced solutions with zero active edges between conflicting students, effectively separating them, whereas manual mappings, despite reflecting teacher experience,  failed to meet all the conflicting problem's constraints.

For instance, Figures~\ref{fig:4.1c2} and \ref{fig:4.1c3} illustrate results for Classroom I, which has the fewest conflicts. The manually created assignment student-desk (Figure~\ref{fig:4.1c3}), though relatively organized, still showed 12 active edges, with only 4 satisfying the constraints. In contrast, Figure~\ref{fig:4.1c2} demonstrates the ILS's ability to achieve an assignment will without active edges. This pattern continues for more challenging cases: for Classroom II (Figures~\ref{fig:4.2c2} and \ref{fig:4.2c3}), which has the highest number of conflicts (88), the manual mapping displayed numerous active edges, underscoring the difficulty that teachers face in creating effective mappings. Conversely,  ILS obtained a configuration with zero active edges. Similarly, the assignment provided by the teachers for Classroom III (Figures~\ref{fig:4.3c2} and \ref{fig:4.3c3}), with an intermediate number of conflicts (53), violate minimum distance constraints and fail to achieve zero active edges, unlike the ILS solution.

Ultimately, this analysis demonstrates that the proposed heuristic can assist teachers in creating more efficient seating arrangements. By enabling the generation of multiple optimized solutions and facilitating targeted adjustments, this approach, through its consideration of model constraints and application of optimization techniques, offers a more objective and effective alternative for classroom organization.


\section{Conclusions and Future Work}
\label{sec:conclusions}

This paper approached the Student Seat Allocation Problem (SSAP), which involves  optimizing the student seating in a classroom with traditional arrangement. The goal is to assign students to seats in a way that minimizes conflict  by placing students with potential issues as far as apart as possible. The mathematical model introduced in this paper included constraints ensuring a minimum distance between seats assigned to conflicting students. The objective function involved minimizing active edges, meaning conflicting students seated in adjacent or in the same row, and maximizing the sum of their length. Besides, this formulation considers seat preferences where students are required to seat at front or back seats.

To heuristically solve the SSAP, we propose an ILS metaheuristic that returns a feasible solution whenever one is found; otherwise, it outputs the best relaxed solution obtained by penalizing constraint violations in the objective.

To validate the model and heuristic approach, the ILS heuristic and the Gurobi solver results were evaluated using a benchmark of 131 artificial instances. Both methods performed efficiently on small and medium-sized instances, successfully converging to the exact same optimal solution in 78 cases (60\% of the benchmark set). However, for large-scale instances, the exact solver frequently exhausted its 3,600-second time limit. Despite this, it achieved exactly zero gaps or very low gaps in most large-scale cases, outperforming the heuristic's average gap in 45 instances (34\% of the set). In these challenging scenarios, ILS average gaps fluctuated between 0.30 and 0.97 for instances 31–45, stabilized below 0.43 for instances 76–89, and outperformed Gurobi's bounds in 8 instances of the 120–131 block. Furthermore, the analysis of individual runs revealed that the heuristic successfully reached the global optimal solution (gap equal to zero) across multiple executions for instances 31, 34, and 78, proving to be a fast alternative that delivers solution quality within a fraction of the exact solver's computational time.

Beyond performance, accessibility is also a crucial factor. Gurobi is a commercial solver, which can limit its use in public educational institutions. In contrast, the implemented heuristic will be made publicly available and freely accessible for educational use by instructors. 

As future research, additional features could be incorporated to enhance the realism of the problem. One promising direction is the inclusion of constraints that group students by learning ability to foster peer collaboration. Another natural extension is to account for varying levels of interpersonal conflict: since conflicts differ in severity, introducing a graded conflict scale would allow the model to impose differentiated seating requirements — enforcing strict isolation for severe conflicts while applying more flexible distance thresholds for minor one. Moreover, future research could extend the SSAP to accommodate non-traditional classroom topologies, such as modular groupings or double-occupancy seatings.

\section*{Acknowledgments}
The authors thank the Coordenação de Aperfeiçoamento de Pessoal de Nível Superior/Brasil (CAPES) – Finance Code 001 – for the financial support.
We are also grateful for FAPESP (grant 2022/05803-3) and Conselho Nacional de Desenvolvimento Científico e Tecnológico (CNPq) (grants 305157/2025-6,403735/2021-1) for the financial support.
Research carried out using the computational resources of the Center for Mathematical Sciences Applied to Industry (CeMEAI) funded by FAPESP (grant 2013/07375-0).









\appendix

\section{Complexity of SSAP}\label{complexity}

\newtheorem{theorem}{Theorem}

\begin{theorem}
The Student Seat Allocation Problem (SSAP) is NP-hard.
\end{theorem}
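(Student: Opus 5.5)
We reduce from Graph $k$-Colorability (NP-complete for fixed $k\ge 3$), showing that already the decision version of SSAP (``is there a feasible assignment with objective value $0$?'') is NP-hard. Since an assignment with no active edges has objective $0$, the optimization problem is at least as hard as this decision problem.

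The decision problem is: is there a feasible assignment in which no conflicting pair lies in consecutive rows and every conflicting pair in the same row is at distance at least $d_{min}'$? Requiring that there be no active edges means each pair of students connected by an edge of $E$ sits in the same row or in rows whose indices differ by at least $2$.

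Given a graph $H=(V_H,E_H)$ with $|V_H|=N$ and an integer $k$, build the following instance. Take $|\Lambda| = 2k-1$ rows and set every $r_i=0$. Only the odd rows $1,3,\dots,2k-1$ are meant to hold the students of $V_H$. The conflict graph is the complement-free encoding: put $c_{ij}=1$ whenever $\{i,j\}\notin E_H$.

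With this encoding, same-row students are pairwise in conflict and must therefore be spaced at least $d_{min}'=2$ apart. Adjacent vertices of $H$ are \emph{not} in conflict, so they could share a row. That is the wrong direction: a row would correspond to a clique cover rather than a color class.

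The simpler and correct choice is $c_{ij}=1$ if and only if $\{i,j\}\in E_H$, using the same-row spacing to forbid nothing essential. The design is then as follows.

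Rows alternate between ``color rows'' (odd indices) and ``buffer rows'' (even indices). Each color row has at least $2N$ desks, so any set of non-conflicting students can be placed in it. Each buffer row has exactly $M$ desks, and the buffer rows are filled with extra isolated students (degree $0$) so that every desk is occupied. The occupancy is arranged so that buffer rows are necessarily full of the dummy students. The cleanest way to achieve this is to allow $|S|<n$ as the model does and not force occupancy at all. Instead, give each color row width $2N$ and make the conflict edges $E_H$.

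Then a proper $k$-coloring of $H$ yields an assignment with objective $0$. Place color class $t$ in row $2t-1$ at positions $1,3,5,\dots$. Conflicting students then lie in distinct odd rows, which are never consecutive, and same-row pairs are non-conflicting, so constraint \eqref{r5} holds trivially.

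The converse is the main obstacle. An objective-$0$ solution may use the buffer rows, or place conflicting students in the same row at distance $\ge 2$, so the row classes need not be independent sets. To block same-row conflicts, make every row short: width $2$, with $d_{min}'=2$ (note the paper requires $d_{min}\le \min n_\lambda$; I would instead take rows of width $2$ and $d_{min}'=2$, so two students in a row are at distance $1<2$ and cannot conflict). Then the rows-in-a-row relation is only a ``same or non-adjacent row'' partition, and I would argue via the path structure of rows: the map student $\mapsto$ row index is a homomorphism from $H$ into the complement of the path $P_{|\Lambda|}$ (with loops removed).

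Since graph homomorphism to a fixed non-bipartite target is NP-hard by the Hell--Ne\v{s}et\v{r}il theorem, fix $|\Lambda|=5$. The complement of $P_5$ contains a triangle ($1,3,5$), so it is non-bipartite, and deciding $H\to\overline{P_5}$ is NP-complete. Row widths are set to $N$ (hence $d_{min}'$ equal to $N$, enforcing at most one conflicting... ) — here the delicate part is handling capacity and same-row spacing consistently. I expect to resolve it by padding with isolated dummy students and choosing widths so that no two students of $H$ can share a row with a conflict, while capacity never binds. Polynomiality of the construction is then immediate.
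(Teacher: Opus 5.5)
Your final idea is correct and takes a genuinely different route from the paper, but you stop before assembling it. The ``delicate part'' you defer is already settled by the choice you mention in passing. (The width-$2$ variant, by contrast, fails: five rows of two desks cannot hold $N$ students.)

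Take $|\Lambda|=5$, $n_\lambda=N$ for every row, conflict set $E=E_H$, all $r_i=0$, $d_{min}'=N$ and $\Psi>N-1$. The value $d_{min}'=N$ is admissible, since the model only asks $2\le d_{min}'\le\min_\lambda n_\lambda$. This instance has three properties:
\begin{itemize}
\item Constraint \eqref{r5} would require $z-k\ge N$ for two conflicting students in one row, which is impossible, so no row contains a conflicting pair.
\item A row has room for all $N$ students, so capacity never binds. Add $4N$ isolated students only if you want $|S|=n$.
\item Each active edge contributes $|z-k|-\Psi<0$. Hence a feasible seating of value $0$ exists if and only if there is a row assignment $\sigma$ with $|\sigma(i)-\sigma(j)|\ge 2$ on every edge of $H$.
\end{itemize}
State this last equivalence explicitly. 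Knowing that a conflict-free seating scores $0$ is not enough; you also need that every seating with an active edge scores strictly less.

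Hell--Ne\v{s}et\v{r}il is more than you need. The map $\lambda\mapsto\lceil\lambda/2\rceil$ is a homomorphism $\overline{P_5}\to K_3$, because rows $2t-1$ and $2t$ are consecutive and so cannot carry a conflict between them. Rows $1,3,5$ give $K_3\to\overline{P_5}$. So you are simply reducing from 3-colorability. The same pairing removes the converse obstacle you raised for the $(2k-1)$-row design: a student sitting in buffer row $2t$ just receives color $t$.

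The paper argues differently. It restricts SSAP to two rows and reduces from the two-layer linear arrangement problem, so its hardness comes from the secondary, distance-maximizing term of the objective. To separate the two sides of the bipartition it uses $d_{min}'>n_\lambda$ and $d_{min}=1$, values outside the range the model itself prescribes.

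Your reduction uses only admissible parameters and only the primary criterion. It shows that deciding whether a seating without active edges exists is NP-complete already with five rows. Since all numbers are bounded by $N$, the hardness is in the strong sense. Under the paper's convention $d_{min}=d_{min}'$ (here $=N$, which makes every active edge violate \eqref{r10}), it shows that even finding a feasible seating is NP-hard. The paper's route, if made rigorous, would give hardness with just two rows. Yours gives a self-contained argument that does not depend on the distance term at all.
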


\begin{proof}
We demonstrate the NP-hardness of SSAP through a polynomial-time reduction from the 2-Layer Linear Arrangement Problem (2LLA). The 2LLA problem involves a bipartite graph $G = (V_1 \cup V_2, E)$, where the vertices of $V_1$ and $V_2$ must be placed on two parallel lines (layers). The goal is to find a permutation (ordering) of the vertices in each layer that minimizes or maximizes a function of the edge lengths, such as the total horizontal displacement. This problem is known to be NP-hard for general bipartite graphs \citep{shahrokhi2001bipartite}. 

Consider a restriction of the SSAP to only two rows ($|\Lambda|=2$). In this specific case, the objective function (\ref{obj1}) simplifies to finding a permutation of students in two consecutive layers that maximizes the horizontal displacement $|z - k|$ for all conflicting pairs $(i,j) \in E$.

For any bipartite graph $G = (V_1 \cup V_2, E)$, we can map it to the SSAP by assigning a minimum distance value $d_{min}' > n_{\lambda}$ to all pairs of students within the same partition ($V_1$ or $V_2$). Moreover,  $d_{min}$ must be $1$.

Due to the constraint $d_{min}' > n_{\lambda}$, no two students from the same partition can be assigned to the same row, effectively forcing $V_1$ into row $\lambda$ and $V_2$ into row $\lambda+1$. The problem then reduces to finding an ordering (permutation) of nodes in two layers to optimize edge lengths.

Since the linear arrangement of bipartite graphs to optimize edge lengths is known to be NP-hard,  SSAP is also NP-hard.
\end{proof}

\section{Effectiveness of the Initial Solution}
\label{subsec:sample_ini}

To assess the impact and efficiency of the construction phase, we compare the quality of its solutions and computational time against those of the full ILS. Figures~\ref{fig:nuvem_gap} and~\ref{fig:nuvem_time} display the gap to the BKS and the average execution times, respectively, across all 131 instances. We distinguish among three solutions: the constructive phase yielding a feasible solution, labeled ``Initial''; the constructive phase yielding an infeasible solution, labeled ``Initial (Infeasible)''; and the solution obtained immediately after the intensification and diversification steps of ILS, labeled ``ILS with Seed''.

As shown in Figure~\ref{fig:nuvem_gap}, the proposed constructive heuristic provides high-quality starting points. In 13 instances (10\% of the total), the initial solution already matched the BKS. For the 36 instances in which the constructive method yields an infeasible initial solution, ILS was still able to find feasible solutions, achieving an average gap to the BKS of 33\%, with a best gap of 0 (in four instances) and a worst gap of 1 (in one instance).

\begin{figure}[!ht]
    \centering
    \includegraphics[width=\textwidth]{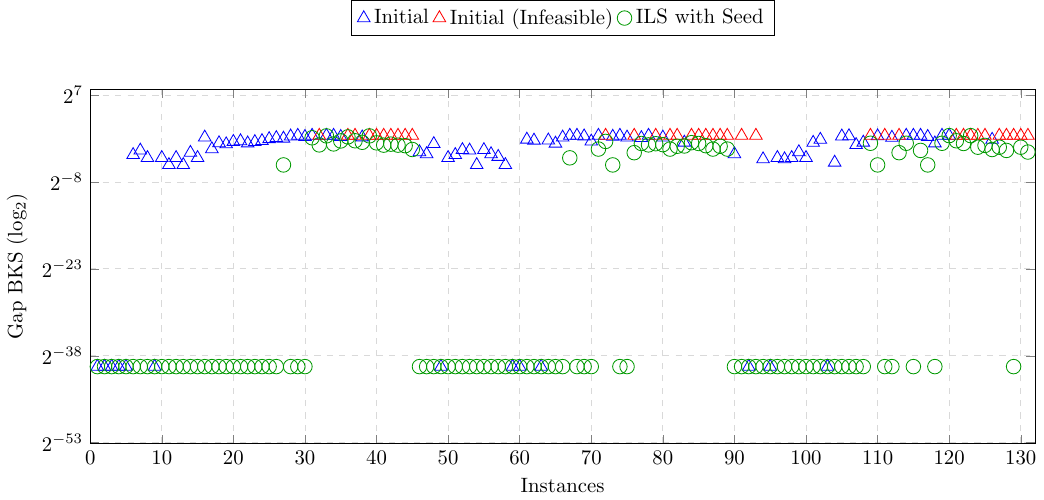}
    \caption{Average gap of the initial solution vs. average gap of the ILS solution.}
    \label{fig:nuvem_gap}
\end{figure}

\begin{figure}[!ht]
    \centering
    \includegraphics[width=\textwidth]{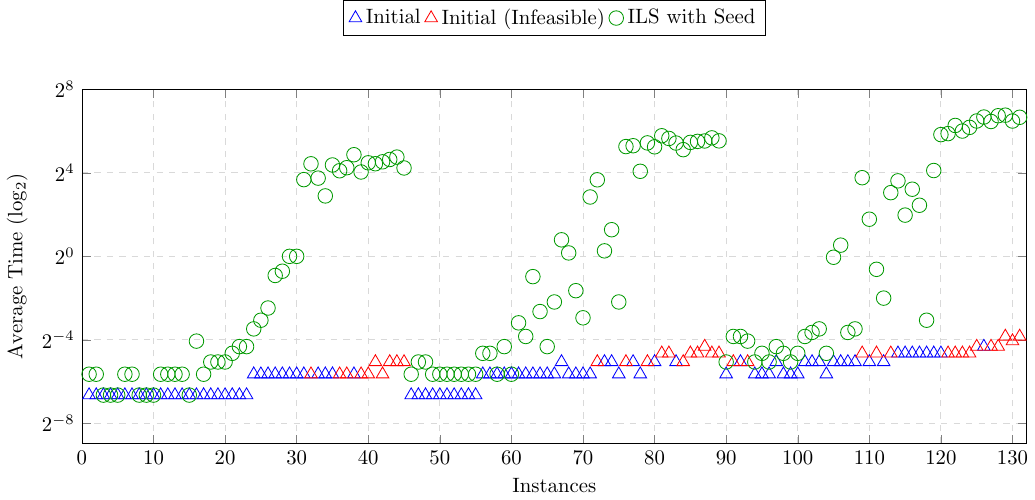}
    \caption{Average computational time of the initial constructive phase vs. average time of ILS with a Seed.}
    \label{fig:nuvem_time}
\end{figure}

Regarding computational time, Figure~\ref{fig:nuvem_time} shows that the effort devoted to the initialization phase is practically negligible. The constructive heuristic is exceptionally fast, requiring on average only 0.02 seconds and never exceeding 0.05 seconds. ILS with Seed requires an average of 15.64 seconds; in 55.73\% of instances it terminates in under one second, while the longest execution reaches 129.32 seconds (instance 131).

\end{document}